\numberwithin{equation}{section}
\theoremstyle{plain}
\newtheorem{main theorem}{Main Theorem}
\newtheorem{theorem}{Theorem}[section]
\newtheorem{lemma}[theorem]{Lemma}
\newtheorem{corollary}[theorem]{Corollary}
\newtheorem{proposition}[theorem]{Proposition}
\theoremstyle{definition}
\newtheorem{definition}[theorem]{Definition}
\newtheorem{remark}[theorem]{Remark}
\newtheorem{question}[theorem]{Question}
\begin{document}
\title{Minimal amenable subshift with full mean dimension}

\author[Zhengyu Yin, Zubiao Xiao]{Zhengyu Yin, Zubiao Xiao*}

\subjclass{ 37B99, 54H15}

\keywords{Dynamical systems,  Amenable group, Tiling, Minimal subshift, Mean topological diemnsion}

\maketitle

\begin{abstract}
Let $G$ be an infinite countable amenable group and $P$ a polyhedron with topological dimension $dim(P)<\infty$. We construct a minimal subshift $(X,G)$ such that its mean topological dimension is equal to $dim(P)$. This result answers the question of D. Dou in \cite{DD}, moreover, it is also an extension of the work of L. Jin and Y. Qiao \cite{JQ} for $\mathbb{Z}$-action.
\end{abstract}

\section{Introduction} \label{section: introduction}
In 1999, M. Gromov in \cite{MG} introduced the notion of mean dimension for a topological dynamical system $(X,f)$, where $X$ is a compact metric space and $f$ a continuous self-map on $X$, which is, as well as the topological entropy, an invariant under conjugacy.
In 2000, E. Lindenstrauss and B. Weiss in \cite{BL} showed that a system $(X,f)$ with finite dimension phase space has zero dimension, they also proved that the values of mean topological dimension of $\mathbb{Z}$-action can run over $[0,\infty]$. Due to this result, they also negatively answered the question asked by J. Auslander in \cite{Au}: Whether all minimal $\mathbb{Z}$-action systems can be embedded into $\left([0,1]^{\mathbb{Z}},\mathbb{Z}\right)$.  In \cite{el}, E. Lindenstrauss proved that a system $(X,f)$ which is an extension of a minimal system, and $K$ is a convex set with a non-empty interior such that $mdim(X,f)<dim(K)/36$, then $(X,f)$ can be embedded in the shift $\left(K^\mathbb{Z},\mathbb{Z}\right)$. Later, E. Lindenstrauss and M. Tsukamoto in \cite{LT} constructed a minimal system with mean dimension equal to $m/2$ that can't embed into $\left(([0,1]^m)^\mathbb{Z},\mathbb{Z}\right)$. More recently, Y. Gutman and M. Tsukamoto in \cite{GT} showed
that if $(X,f)$ is a minimal system with $mdim(X,f)<m/2$, then $(X,f)$ can be embedded into $\big(([0,1]^m)^\mathbb{Z},\mathbb{Z}\big)$. Therefore, $m/2$ is the optimal number for minimal systems embedding into $\big(([0,1]^m)^\mathbb{Z},\mathbb{Z}\big)$
.

Given an infinite countable amenable group $G$, It is natural to discuss the value of the mean dimension of the system $(X,G)$. Let $P$ be a polyhedron with topological dimension $dim(P)$ and  $(P^G, G)$ the $G$-shift on $P$. Given $0\leq\rho\leq dim(P)$ and $G$ a residual finite amenable group, M. Coornaert and F. Krieger in \cite{CK} constructed a closed subshift $X$ of $P^G$ (need not to be minimal) such that $mdim(X,G)=\rho$. Soon after, let $G$ be an infinite countable amenable group (without any additional assumption) and $\epsilon>0$. F. Krieger in \cite{FK} constructed a minimal subshift $X$ of $P^G$ such that its mean topological dimension is $\epsilon$-close to $dim(P)$. More recently, by applying the methods of construction of Toeplitz sequence in \cite{SW} and the newest results of the tiling theory of amenable groups in \cite{DHZ}, let $0\leq\rho< dim(P)$, D. Dou \cite{DD} constructed a minimal subshift $X$ of $P^G$ with $mdim(X, G)=\rho$, and a question was left:

\begin{question}\rm{(\cite[Question 1]{DD})}\label{que1}
Let $G$ be an infinite countable amenable group and $P$ a polyhedron with topological dimension $dim(P)$, whether it is possible to construct a minimal $G$-action subshift $X$
of $P^G$ with full mean topological dimension (i.e., $mdim(X, G)=dim(P)$).
\end{question}

In the case of $\mathbb{Z}$-action, L. Jin and Y. Qiao \cite{JQ} constructed a minimal $\mathbb{Z}$-subshift $X$ of $[0,1]^{\mathbb{Z}}$ with $mdim(X,\mathbb{Z})=1$, which positively answers the Question \ref{que1} for $\mathbb{Z}$-action. Soon after, J. Zhao \cite{ZJ} considered the construction of a minimal system of given mean dimension in \textit{Berstein space}, which generalizes the results in \cite{DD}. In this paper, we extend L. Jin and Y. Qiao's result in \cite{JQ} to infinite countable amenable group $G$ action and give a full answer to Question \ref{que1}. More precisely, we obtain the following result:
\begin{theorem}\rm\label{mt}
    Let $G$ be an infinite countable amenable group and $P$ a polyhedron with topological dimension $dim(P)$, there is a minimal subshift $X$ of $P^G$ such that $mdim(X,G)=dim(P)$.
\end{theorem}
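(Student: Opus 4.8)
Write $d=dim(P)$ and assume $d\ge 1$ (the case $d=0$ is trivial: a constant sequence is a minimal subshift with $mdim=0$). First I would reduce to cubes: a polyhedron of dimension $d$ contains a geometric $d$-simplex, hence a homeomorphic copy of $[0,1]^d$, and fixing such an embedding $\iota\colon[0,1]^d\hookrightarrow P$ turns $\big(([0,1]^d)^G,G\big)$ into a subshift of $(P^G,G)$; so it is enough to produce a minimal subshift $X\subseteq([0,1]^d)^G$ with $mdim(X,G)=d$. The bound $mdim(X,G)\le d$ is then automatic, since mean dimension does not increase under passage to subsystems and $mdim\big(([0,1]^d)^G,G\big)=d$. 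Thus the whole task is to build a \emph{minimal} $X$ with $mdim(X,G)\ge d$.

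\textbf{Scaffold.} In Jin and Qiao's $\mathbb{Z}$-construction \cite{JQ} the hierarchy is organised by a chain of nested periods $p_1\mid p_2\mid\cdots$. For an arbitrary countable amenable group I would replace this by a congruent sequence of tilings of $G$, which exists by the tiling theory of Downarowicz--Huczek--Zhang \cite{DHZ} (as already exploited by Dou \cite{DD}): tilings $\mathcal{T}_1,\mathcal{T}_2,\dots$, each with finitely many shapes, such that $\mathcal{T}_{n+1}$ coarsens $\mathcal{T}_n$ (every level-$(n+1)$ tile is a disjoint union of at least $c_{n+1}$ level-$n$ tiles), the level-$n$ shapes are $(K_n,\varepsilon_n)$-invariant with $K_n\nearrow G$ and $\varepsilon_n\to0$, and the multiplicities $c_n$ grow as fast as the construction requires. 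Choosing one level-$n$ tile $T_n$ for each $n$ yields a Følner sequence $(T_n)$ along which mean dimension can be evaluated.

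\textbf{The point and the subshift.} Next I would build a single $\omega\in([0,1]^d)^G$ by recursion on the levels. Inside each level-$n$ tile I would designate a \emph{free region}, of relative size tending to $1$, on which $\omega$ carries prescribed values in $[0,1]^d$, and a complementary \emph{control region}, of relative size tending to $0$, used to record the hierarchical position of the tile and enough bookkeeping that, as the free regions at all scales get filled, every finite pattern of free values is forced to recur; the amount of freedom injected at each scale, and the way it is threaded through the hierarchy, must be tuned so that the per-scale contributions accumulate to exactly $d$. I would then set $X:=\overline{G\omega}$ (the finitely many shapes at each level cost only routine bookkeeping).

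\textbf{Minimality, mean dimension, and the main difficulty.} Minimality reduces to showing that $\omega$ is almost periodic, i.e.\ that $\{g\in G:\max_{w\in W}|(g\omega)_w-\omega_w|<\varepsilon\}$ is syndetic for every finite $W\subseteq G$ and $\varepsilon>0$: for $n$ large, $W$ sits inside one level-$n$ tile after a controlled translation (by invariance of the shapes); the tiling pattern together with the control data forms a minimal zero-dimensional system, so level-$n$ tiles recur syndetically; and for $m\gg n$ the bookkeeping written between scales $n$ and $m$ forces, inside every level-$m$ tile, a level-$n$ tile whose free and control data agree up to $\varepsilon$ with those of $\omega$ near $e$ — combining these recurrences gives the required syndetic set. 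For the lower bound I would show that along $(T_n)$ the restriction of $X$ to $T_n$ contains a family of $\Phi_n$ coordinates ranging freely over $[0,1]$ (the free values of all scales, visible in the orbit closure), with $\Phi_n/|T_n|\to d$; hence $\mathrm{Wdim}_\varepsilon(X,T_n)\ge\Phi_n$ for every $\varepsilon<\tfrac12$ and $mdim(X,G)\ge\lim_n\Phi_n/|T_n|=d$, so $mdim(X,G)=d$. The hard part — and precisely the point where this improves on Krieger's $\varepsilon$-approximation \cite{FK} and on Dou's values $\rho<dim(P)$ \cite{DD} — is to reconcile these two demands, keeping $X$ minimal while pushing its mean dimension all the way to the maximal value $dim(P)$; this is where Jin and Qiao's scale-by-scale mechanism \cite{JQ} has to be merged with the tiling machinery of \cite{DHZ}.
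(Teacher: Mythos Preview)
Your scaffold (congruent tilings of $G$, build a point $\omega$, take the orbit closure, check almost periodicity and a lower bound on mean dimension) matches the paper's, but the mechanism you describe for filling the tiles is essentially Dou's ``free region / control region'' dichotomy, and that is exactly the mechanism that \emph{cannot} reach $dim(P)$. If at level $n$ you reserve a control region of relative size $\delta_n>0$ on which $\omega$ is determined (so that level-$n$ patterns recur syndetically), then in the final system a coordinate ranges freely over $[0,1]$ only if it avoids the control region at \emph{every} level; the density of such coordinates is at most $\prod_n(1-\delta_n)<1$, since you need $\delta_n>0$ at every step for the recurrence argument to bite. So your claimed lower bound ``$\Phi_n$ coordinates ranging freely over $[0,1]$ with $\Phi_n/|T_n|\to d$'' is the step that fails: once you make $\omega$ almost periodic by this route, the set of fully free coordinates has density strictly below $1$, and you recover Dou's $\rho<dim(P)$ rather than the full value.

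The paper (following Jin--Qiao) avoids this by abandoning the free/fixed dichotomy altogether. The block attached to a shape $S_{k,j}$ is a genuine product of closed subintervals $B_{S_{k,j}}=\prod_{g\in S_{k,j}}P^{k,j}_g$; passing from level $k-1$ to level $k$ one does not fix any coordinate, but instead subdivides each $P^{k-1,i}_g$ into $2^k$ equal pieces and places all resulting sub-blocks inside the level-$k$ block. Minimality then comes not from exact repetition but from $\varepsilon$-repetition: every pattern over $S_{k,1}$ is matched up to $diam(P^{k,1}_g)/2^{(m+1)(m-2k)/2}$ inside each level-$m$ tile. The payoff is that in the limit $\widetilde X=\prod_{g\in G}P_g$ with each $P_g$ a nondegenerate interval, and for each threshold the set $J_n=\{g:diam(P_g)\ge 2^{-n(n+1)/2}\}$ has upper density $\ge\eta(n)\to1$. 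One then applies the paper's Corollary~\ref{cor11}: the affine surjection $f^n_g:[0,1]\to P_g$ satisfies $d(x,y)\le 2^{n(n+1)/2}\,d(f^n_g(x),f^n_g(y))$, which gives $mdim(X,G)\ge\delta(J_n)\cdot dim(P)$ for every $n$, hence $mdim(X,G)=dim(P)$. Your outline is missing this ``shrink but never fix'' idea and the double sequence $\eta(n,k)\searrow\eta(n)\nearrow 1$ that quantifies it; without them the construction you sketch stays on the Dou side of the barrier.
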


This paper is organized as follows. In section 2, we recall some basic concepts of topological dynamical systems and amenable groups via the tiling technique. Besides, the definition and some fundamental lemmas of mean topological dimension are also included. In section 3, we prove the Theorem \ref{mt}.
\section{Preliminaries}

\subsection{Amenable groups}

Let $G$ be an infinite countable group and denote $\mathcal{F}(G)$ the class of all finite non-empty subsets of $G$. Recall that a group $G$ is \textit{amenable} if it admits a sequence of non-empty finite subsets $\{F_n:F_n\in \mathcal{F}(G), n\in \mathbb{N}\}$ such that for any  $K\in \mathcal{F}(G)$, we have
\begin{center}
    $\underset{n\to \infty}{\lim}\,\displaystyle\frac{|B(F_n,K)|}{|F_n|}=0$,
\end{center}
where $B(F_n,K)=\{g\in G:Kg\cap F_n\ne \emptyset$ and $Kg\cap (G\backslash F_n)\ne\emptyset\}$ and $|\cdot|$ denotes the cardinality operator. Moreover, we call such sequence $\{F_n:F_n\in \mathcal{F}(G), n\in\mathbb{N}\}$ a \textit{F$\phi$lner sequence} of $G$. Throughout the paper, we assume that $G$ is an infinite countable amenable group.

\subsection{Dynamical systems and shifts}
We call $(X,G)$ a \textit{$G$-space} if $G$ acts on compact metric space $X$  such that $e_Gx=x$, $g_1(g_2x)=(g_1g_2)x$ for any $g_1,g_2\in G$ and $x\in X$, where $e_G$ is the identity of $G$. Let $(X, G)$ be a $G$-space with metric $d$ on $X$, given $F\in \mathcal{F}(G)$, we define 
\begin{equation*}
    d_F(x,y)=\max_{g\in F}\,d(gx,gy).
\end{equation*}
Clearly, $d_F$ is a compatible metric on $X$ for any $F\in \mathcal{F}(G)$.

 A subset $A\subset G$ is said to be \textit{syndetic} if there is a $K\in \mathcal{F}(G)$ such that $KA=G$, where $KA=\{ka:k\in K, a\in A\}$.
Let $(X, G)$ be a $G$-space, $x\in X$ and $U$ a neighborhood of $x$, we denote $N_G(x,U)=\{g\in G:gx\in U\}$ by the \textit{return-time} set of $x$ with respect to $U$.
We say $x\in X$ a \textit{almost periodic point} if for any neighborhood $U$ of $x$, there is a syndetic subset $A_U\subset G$ such that $A_U\subset N_G(x,U)$. For any $x\in X$, denote $\mathcal{O}(x)=\{gx:g\in G\}$ the orbit of $x.$
and $\overline{\mathcal{O}x}$ the orbit-closure of $x$. 

The $G$-space $(X,G)$ is said to be \textit{minimal} if $\overline{Gx}=X$ for all $x\in X$. It is known that $(X,G)$ is minimal if $X$ is an orbit-closure of some almost periodic point $x\in X$ \big(see \cite[Theorem 1.7]{Au}\big).

Let $P$ be a compact metric space and $P^G$ the product space endowed with the product topology. The $G$-\textit{shift} defined on $P^G$ is given by 
\[g'(x_g:g\in G)=(x_{gg'}:g\in G),\]
then $(P^G,G)$ is a $G$-space. If $X$ is a $G$-invariant closed subset of $P^G$, we call $(X,G)$ a $G$-\textit{subshift}.

\subsection{Tiling of amenable group}
The definition of tiling of a discrete countable group is originally from \cite{OW}. In \cite{DHZ}, the authors proved that for any infinite countable amenable group $G$, given $K\in \mathcal{F}(G)$ and $\epsilon>0$, there is a tiling with finite shapes such that all the tiles are $(K,\epsilon)$-invariant. In \cite{DD},
The author used the tiling technique to construct a minimal system with a specific mean dimension. In this paper, to construct a minimal subshift with full mean topological dimension the tiling technique of an amenable group is also required.
\begin{definition}\rm{(\cite{OW,DHZ})}
    We say that $\mathcal{T}\subset \mathcal{F}(G)$ is a \textit{tiling} of $G$ if $\mathcal{T}$ is a disjoint union of $G$, that is, for any $T,T'\in \mathcal{T}$, $T\cap T'=\emptyset$ and $\cup_{T\in \mathcal{T}}T=G$.
Each $T\in \mathcal{T}$ is called a \textit{tile}.
    \end{definition}

  \begin{definition}\rm{(\cite{DHZ})}
          A tiling $\mathcal{T}$ is said to be \textit{finite}, if there is a finite collection $\mathcal{S}=\{S_1,...,S_n\}$ of elements of $\mathcal{F}(G)$ such that each tile $T\in \mathcal{T}$ is a right transformation of some $S_i\in \mathcal{S}$, i.e., there is some $c\in G$ such that $T=S_ic$. We call $\mathcal{S}$ a \textit{shape} of $\mathcal{T}$.
        For each $S_i\in \mathcal{S}$, we denote
        \begin{equation*}
            C(S_i)=\{c:S_ic\in \mathcal{T}\},
        \end{equation*}
        by the \textit{center} of $S_i$.
  \end{definition}
    For convenience, we assume that each element in $\mathcal{S}$ can not be the right translation of others and contains the identity $e_G$.

       \begin{definition}\rm{(\cite{DD})}
            A finite tiling  $\mathcal{T}$  with shape $\mathcal{S}$ is said to be \textit{irreducible} if $C(S_i)$ is syndetic for each $S_i\in \mathcal{S}$, i.e., for any $S_i\in \mathcal{S}$ there is a $K_i\in\mathcal{F}(G)$ such that $K_iC(S_i)=G$.
       \end{definition}

To construct the minimal subshift, we need to borrow some useful results which have been proved in like \cite{DHZ,DD} and also been restated in \cite{JPQ}.

 \begin{lemma}\rm{(\cite[Lemma 3.2]{DD})}\label{prop4}
Suppose $\mathcal{T}$ is a finite irreducible tiling of $G$ with shape $\mathcal{S}$, then for any $S_i\in\mathcal{S}$ and $n\in \mathbb{N}$, there is a $T\in\mathcal{F}(G)$ and $\epsilon>0$ such that whenever $K\in\mathcal{F}(G)$ is $(T,\epsilon)$-invariant, $K$ contains at least $n$ elements in $\mathcal{T}$ with shape $S_i$.
 \end{lemma}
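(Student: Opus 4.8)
The plan is to convert the irreducibility hypothesis into the counting bound $|C(S_i)\cap K_i^{-1}K|\ge |K|/|K_i|$, and then to check that for a sufficiently invariant finite set $K$ the number of \emph{complete} shape-$S_i$ tiles inside $K$ is proportional to $|K|$, the incomplete ones being trapped in the boundary layer $B(K,\cdot)$; it then remains only to observe that strong invariance already forces $|K|$ to be large, so that this fixed proportion overtakes any prescribed $n$.

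First I would extract the counting bound. By the definition of irreducibility, $C(S_i)$ is syndetic, so fix $K_i\in\mathcal{F}(G)$ with $K_iC(S_i)=G$; enlarging $K_i$ we may assume $e_G\in K_i$, and recall $e_G\in S_i$ by the standing convention on shapes. For any finite $K\subseteq G$, choosing for each $a\in K$ a decomposition $a=kc$ with $k\in K_i$ and $c\in C(S_i)$ defines an injection $a\mapsto(k,c)\in K_i\times\bigl(C(S_i)\cap K_i^{-1}K\bigr)$ (indeed $c=k^{-1}a\in K_i^{-1}K$), whence
\[
|K|\ \le\ |K_i|\cdot\bigl|C(S_i)\cap K_i^{-1}K\bigr|.
\]

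Next I would keep only the complete tiles. Put $K_0:=\{c\in G:S_ic\subseteq K\}=\bigcap_{s\in S_i}s^{-1}K$; since $e_G\in S_i$ we have $K_0\subseteq K$, and the tiles of $\mathcal{T}$ of shape $S_i$ contained in $K$ are exactly the $S_ic$ with $c\in C(S_i)\cap K_0$ (distinct such $c$ give disjoint tiles). A direct inspection of the boundary set $B(K,\cdot)$ gives $K\setminus K_0\subseteq B(K,S_i)$ (if $c\in K$ but $S_ic\not\subseteq K$ then $S_ic$ meets both $K$ and $G\setminus K$) and $K_i^{-1}K\setminus K\subseteq B(K,K_i)$ (using $e_G\in K_i$), so
\[
\bigl|C(S_i)\cap K_0\bigr|\ \ge\ \bigl|C(S_i)\cap K_i^{-1}K\bigr|-\bigl|K_i^{-1}K\setminus K\bigr|-\bigl|K\setminus K_0\bigr|\ \ge\ \frac{|K|}{|K_i|}-\bigl|B(K,K_i)\bigr|-\bigl|B(K,S_i)\bigr|.
\]

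Finally I would fix the data $(T,\epsilon)$. Take $T\in\mathcal{F}(G)$ with $\{e_G\}\cup K_i\cup S_i\subseteq T$ and $|T|\ge 2n|K_i|$, and set $\epsilon:=\tfrac{1}{4|K_i|}$. Since $B(K,\cdot)$ is monotone in its second argument, $|B(K,K_i)|+|B(K,S_i)|\le 2|B(K,T)|$, so if $K$ is $(T,\epsilon)$-invariant the last display gives $|C(S_i)\cap K_0|\ge|K|\bigl(\tfrac{1}{|K_i|}-2\epsilon\bigr)=\tfrac{|K|}{2|K_i|}$. On the other hand $(T,\epsilon)$-invariance forces $|K|\ge|T|$: one has $\{g:Tg\subseteq K\}=\{g:Tg\cap K\neq\emptyset\}\setminus B(K,T)$, the minuend contains $K$ because $e_G\in T$, so $|\{g:Tg\subseteq K\}|\ge|K|-|B(K,T)|\ge(1-\epsilon)|K|\ge1$ (here $|K|\ge2$, since a singleton fails $(T,\epsilon)$-invariance once $|T|\ge2$), hence some translate $Tg$ lies inside $K$. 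Combining, $|C(S_i)\cap K_0|\ge\tfrac{|T|}{2|K_i|}\ge n$, i.e. $K$ contains at least $n$ tiles of $\mathcal{T}$ of shape $S_i$, as asserted. I do not anticipate any real obstacle: this is a density-plus-boundary count, and the only points that repay a little care are the two boundary inclusions $K\setminus K_0\subseteq B(K,S_i)$ and $K_i^{-1}K\setminus K\subseteq B(K,K_i)$, which let invariance control the incomplete tiles, together with the observation that a strongly invariant set is at least as large as its invariance witness $T$ --- exactly what turns the fixed proportion $\tfrac{1}{2|K_i|}$ into the prescribed count $n$.
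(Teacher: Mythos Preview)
The paper does not supply its own proof of this lemma: it is quoted verbatim from \cite[Lemma~3.2]{DD} and used as a black box. So there is no in-paper argument to compare against. That said, your proposal is a correct, self-contained proof and follows the natural density-plus-boundary strategy one would expect (and which is essentially the argument in \cite{DD}): syndeticity of $C(S_i)$ gives a positive lower density of centres in any finite window, and $(T,\epsilon)$-invariance with $T\supseteq S_i\cup K_i$ controls both the ``incomplete tiles'' $K\setminus K_0$ and the overshoot $K_i^{-1}K\setminus K$ via the boundary $B(K,T)$.

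Two minor remarks on presentation. First, the injectivity in step~2 deserves one more word: you are \emph{choosing} one decomposition $a=kc$ per $a$, and then $(k,c)\mapsto kc$ recovers $a$, so distinct $a$'s cannot collide. Second, the step ``$(T,\epsilon)$-invariance forces $|K|\ge|T|$'' is correct as you wrote it, but the $|K|\ge 2$ aside is slightly delicate (you silently use $e_G\in T$ to get $a\in B(\{a\},T)$); it might be cleaner simply to note that $(1-\epsilon)|K|>0$ already gives a translate $Tg\subseteq K$ once $K\neq\emptyset$, since $|\{g:Tg\subseteq K\}|$ is an integer and $(1-\epsilon)|K|\ge 1-\epsilon>0$ forces it to be at least $1$ --- no case split needed. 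Neither point affects correctness.
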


\begin{definition}\rm{(\cite{DHZ})}
Now, let $\{\mathcal{T}_n\}_{n\in \mathbb{N}}$ be a sequence of finite tiling, $\{\mathcal{T}_n\}_{n\in \mathbb{N}}$ is said to be \textit{primely congruent} if for each $n\in \mathbb{N}$, $\mathcal{T}_n$ is uniquely refined by $\mathcal{T}_{n-1}$, namely, every $T_{n}\in\mathcal{T}_n$
 is a finite union of some elements of $\mathcal{T}_{n-1}$ and each shape of $\mathcal{T}_n$ is partitioned by shapes of $\mathcal{T}_{n-1}$ in a unique way \big(i.e., for any two $\mathcal{T}_n$-tiles $Sc_1$ and $Sc_2$ of the same shape $S\in \mathcal{S}_n$, we have $\mathcal{T}_n|_{Sc_1}=(\mathcal{T}_n|_{Sc_2})c_2^{-1}c_1\big)$
 \end{definition}
\begin{lemma}\rm{(\cite[Theorem 5.2]{DHZ}, \cite[Theorem 3.6]{DD})}\label{teo4}
    Let $G$ be an infinite countable amenable group with the identity $e_G$, $\{K_n\}_{n\in \mathbb{N}}$ a sequence of finite subsets of $G$ with ${\bigcup}_{n=1}^{\infty}K_n=G$, and $\{\epsilon_n\}_{n\in \mathbb{N}}$ a decreasing sequence of positive numbers converging to zero. Then there is a primely congruent sequence $\{\mathcal{T}_n\}_{n\in \mathbb{N}}$ of finite irreducible tilings satisfying the following conditions:
    \begin{enumerate}
        \item [\rm{(1)}] $e_G\subset S_{1,1}\subset S_{2,1}\subset\cdots\subset S_{n,1}\subset \cdots\subset \bigcup_{n=1}^{\infty}S_{n,1}=G$;

        \item [\rm{(2)}] for each $n\in \mathbb{N}$ and every $S_{n,i}\in \mathcal{S}_n$, $S_{n,i}$ is $(K_n,\epsilon_n)$-invariant, for all $1\leq i\leq m_n$,
    \end{enumerate}
    where $\mathcal{S}_n=\{S_{n,i}:1\leq i\leq m_n\}$ is the shape of $\mathcal{T}_n$.
\end{lemma}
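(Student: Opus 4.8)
The plan is to derive this statement from the Ornstein--Weiss quasi-tiling theorem together with a dynamical ``castle'' construction, carried out inductively so that congruence, invariance, and irreducibility are maintained simultaneously. The engine is the quasi-tiling theorem: for any $K\in\mathcal{F}(G)$ and $\delta>0$ there are $(K,\delta)$-invariant sets $F_1,\dots,F_k$ (drawn from a fixed F\o lner sequence) such that every sufficiently invariant finite subset of $G$ can be covered, up to density $\delta$, by pairwise disjoint left translates of the $F_i$. The difficulty is that quasi-tilings leave an uncovered remainder, whereas we need an exact partition; the device for removing the remainder is to pass to a free minimal action $G\acts Z$ on a Cantor set (such actions exist for every countable $G$) and to realize tilings as clopen Kakutani--Rokhlin partitions (castles) of $Z$. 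A castle whose base is clopen and whose return structure is governed by shapes $S_{n,i}$ restricts, along each orbit $Gz\cong G$, to an \emph{exact} tiling of $G$, and the F\o lner quality of the shapes is inherited from that of the $F_i$.

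The core of the argument is the inductive construction of a nested sequence of castles. Suppose a castle inducing $\mathcal{T}_{n-1}$ has been built. To produce $\mathcal{T}_n$ I would first choose $K_n$-invariant quasi-tiling shapes with invariance parameter far better than $\epsilon_n$, and then run the Ornstein--Weiss machinery \emph{relative} to the existing $\mathcal{T}_{n-1}$, i.e.\ quasi-tile by sets that are themselves exact unions of $\mathcal{T}_{n-1}$-tiles. The uncovered remainder is absorbed into neighbouring tiles by the castle cleanup, and the resulting shapes $S_{n,i}$ are, by construction, finite unions of shapes of $\mathcal{T}_{n-1}$. \emph{Prime} congruence --- that the decomposition of a shape into $\mathcal{T}_{n-1}$-tiles depends only on the shape and not on the particular tile --- is automatic from the clopen castle structure: two $\mathcal{T}_n$-tiles $Sc_1,Sc_2$ of the same shape arise from the same tower, so $\mathcal{T}_n|_{Sc_1}=(\mathcal{T}_n|_{Sc_2})c_2^{-1}c_1$.

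Irreducibility and the nesting conditions then follow from the two structural features of the construction. Because $G\acts Z$ is taken \emph{minimal}, the clopen base of the tower of each shape $S_{n,i}$ meets every orbit syndetically, so the center set $C(S_{n,i})=\{c:S_{n,i}c\in\mathcal{T}_n\}$ is syndetic, which is exactly irreducibility. For condition~(1), the normalization that every shape contains $e_G$ together with prime congruence forces the tile of $\mathcal{T}_n$ containing $e_G$ to contain the tile of $\mathcal{T}_{n-1}$ containing $e_G$; labelling these distinguished shapes $S_{n,1}$ gives $e_G\subseteq S_{1,1}\subseteq S_{2,1}\subseteq\cdots$. Finally, choosing the invariance parameters so that $\epsilon_n<1/\abs{K_n}$ forces $\abs{S_{n,i}}\to\infty$, and since $\bigcup_{n}K_n=G$ an argument in the spirit of Lemma~\ref{prop4} shows the growing distinguished tile eventually contains any prescribed element, whence $\bigcup_n S_{n,1}=G$.

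The step I expect to be the main obstacle is the reconciliation of exactness with good invariance inside the inductive step: quasi-tiling supplies invariance but only an approximate cover, and forcing an exact cover by reassigning the remainder can degrade the F\o lner quality of the shapes. Controlling this requires the remainder to have density $\ll\epsilon_n$ and to be absorbed only along the boundaries of large, highly invariant tiles, so that the perturbed shapes remain $(K_n,\epsilon_n)$-invariant; keeping this estimate compatible with the rigid ``unions of $\mathcal{T}_{n-1}$-tiles'' constraint demanded by prime congruence is the delicate point, and is precisely where the zero-dimensional dynamical formulation of Downarowicz--Huczek--Zhang does the essential work.
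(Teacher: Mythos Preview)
The paper does not prove this lemma at all: it is quoted verbatim from \cite[Theorem~5.2]{DHZ} and \cite[Theorem~3.6]{DD} and used as a black box, so there is no ``paper's own proof'' to compare against. Your sketch is therefore not competing with anything in the present paper; it is an outline of how the cited results themselves are established.

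As such an outline, your proposal is broadly faithful to the architecture of \cite{DHZ}: Ornstein--Weiss quasi-tiling supplies approximate covers by F{\o}lner sets, the remainder is absorbed into adjacent tiles while controlling boundary growth, and the construction is iterated so that level-$n$ shapes decompose into level-$(n-1)$ tiles. One genuine difference is your choice to route everything through an auxiliary free minimal $G$-action on a Cantor set and clopen Kakutani--Rokhlin castles. The original \cite{DHZ} argument is more directly combinatorial --- tilings are encoded as points of a symbolic shift and the exact-tiling step is carried out by a local reassignment rule on the group itself --- while irreducibility (the syndeticity of the center sets, which is the contribution of \cite{DD}) is obtained by a further combinatorial modification rather than by invoking minimality of a pre-existing action. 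Your dynamical packaging is legitimate and makes irreducibility pleasantly automatic, but you should be aware that the existence of clopen castles with F{\o}lner shapes for a given minimal Cantor action is essentially equivalent to the tiling theorem you are proving, so one must be careful that the argument does not become circular; in practice this means the castles have to be \emph{built} via Ornstein--Weiss, not assumed.

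The one place your sketch is genuinely thin is prime congruence. Saying that two $\mathcal{T}_n$-tiles of the same shape ``arise from the same tower'' does not by itself force their $\mathcal{T}_{n-1}$-decompositions to match: you need the base of each $\mathcal{T}_n$-tower to lie inside a single atom of the clopen partition generated by the $\mathcal{T}_{n-1}$ castle, and arranging this while keeping the number of shapes finite and the invariance estimates intact is exactly the bookkeeping that \cite{DHZ} carries out explicitly. Your final paragraph correctly flags the exactness-versus-invariance tension as the hard point, but the prime-congruence constraint deserves the same billing.
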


\subsection{Topological dimension and Mean topological dimension}
In this subsection, we recall the notations of the topological dimension and the mean topological dimension of $(X, G)$.

Let $(X,d)$ be a compact metrizable space and $\alpha=\{U_1,...,U_n\}$, $\beta=\{V_1,...,V_m\}$ finite open covers of $X$, $\beta$ is said to be \textit{finer} than $\alpha$ (written as $\alpha\preceq\beta$) if for any $V_i\in \beta$ there is a $U_j\in \alpha$ such that $V_i\subset U_j$. 

The \textit{order} of $\alpha$ is defined by
\begin{equation*}
    ord(\alpha)=\left(\max_{x\in X}\sum_{i=1}^n 1_{U_i}(x)\right)-1,
\end{equation*}
where $1_{U_i}$ is the character function of $U_i$, and we define a quantity $D(\alpha)$ by
\begin{equation*}
    D(\alpha)=\min_{\alpha\preceq\beta}ord(\beta),
\end{equation*}
where $\beta$ runs over all finite open covers of $X$ that are finer than $\alpha$.

\begin{definition}\rm{(\cite{HW})}
   Let $X$ be a compact topological space, the \textit{topological dimension} of $X$ is given
\begin{equation*}
    dim(X)=\sup_{\alpha}D(\alpha),
\end{equation*}
where $\alpha$ runs over all finite open cover of $X$.
\end{definition}

    Let $(X,d)$ be a compact metric space,
 $P$ a compact metric space, and $\epsilon>0$ a positive number. A continuous map $f:X\to P$ is said to be \textbf{$\epsilon$-injective} if $d(x_1,x_2)\leq \epsilon$ for all $x_1,x_2\in X$ such that $f(x_1)=f(x_2)$.
Let $\epsilon>0$ be a real number, we define
    \begin{equation*}
        dim_\epsilon(X,d)=\inf_Pdim(P),
    \end{equation*}
    where $P$ runs overall compact metric spaces for which there exists an $\epsilon$-injective continuous map $f$ from $X$ to $P$.

    \begin{remark}(\cite[Proposition 4.6.2]{C1})
        Let $(X,d)$ be a compact metric space. It is known that
    \begin{equation*}
        \lim_{\epsilon\to 0}dim_\epsilon(X,d)=dim(X).
    \end{equation*}
    \end{remark}

To prove the main theorem, We need some fundamental Lemmas:
    \begin{lemma}\label{prop6}
    Let $(X,d)$ and $(X',d')$ be two compact metric spaces. Suppose there is a continuous map $f:X\to X'$ and a positive number $c>0$ satisfying
        $d(x,y)\leq c\cdot d'(f(x),f(y))$
     for all $x,y\in X$. Then for any $\epsilon>0$
     \begin{equation*}
         dim_{c\epsilon}(X,d)\leq dim_\epsilon(X',d').
     \end{equation*}

\end{lemma}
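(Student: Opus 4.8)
The plan is to exploit the definition of $dim_\epsilon$ directly via composition of maps. Fix $\epsilon>0$ and let $P$ be any compact metric space for which there exists an $\epsilon$-injective continuous map $g\colon X'\to P$; such a $P$ exists and by definition the infimum of $dim(P)$ over all such $P$ equals $dim_\epsilon(X',d')$. My candidate witness for the space $X$ at scale $c\epsilon$ will be the \emph{same} target $P$, together with the composition $g\circ f\colon X\to P$, which is continuous since both $f$ and $g$ are.

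The key step is to verify that $g\circ f$ is $c\epsilon$-injective. Suppose $x_1,x_2\in X$ satisfy $(g\circ f)(x_1)=(g\circ f)(x_2)$, i.e. $g(f(x_1))=g(f(x_2))$. Since $g$ is $\epsilon$-injective, this forces $d'(f(x_1),f(x_2))\le\epsilon$. Plugging this into the hypothesis $d(x_1,x_2)\le c\cdot d'(f(x_1),f(x_2))$ gives $d(x_1,x_2)\le c\epsilon$, which is exactly the $c\epsilon$-injectivity of $g\circ f$.

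It then follows from the definition of $dim_{c\epsilon}(X,d)$ that $dim_{c\epsilon}(X,d)\le dim(P)$, because $P$ admits a $c\epsilon$-injective continuous map from $X$. Since $P$ was an arbitrary compact metric space admitting an $\epsilon$-injective continuous map from $X'$, taking the infimum over all such $P$ yields $dim_{c\epsilon}(X,d)\le\inf_P dim(P)=dim_\epsilon(X',d')$, as desired.

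I do not anticipate a genuine obstacle here: the argument is a one-line composition estimate, and the only point requiring a small amount of care is making sure the quantifiers are handled in the right order — namely, that one argues "for every competitor $P$ of $X'$ at scale $\epsilon$, the same $P$ is a competitor of $X$ at scale $c\epsilon$," and only afterward passes to the infimum.
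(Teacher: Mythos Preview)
Your proof is correct and follows exactly the same approach as the paper's: the paper's proof consists of the single observation that if $\varphi\colon X'\to P$ is $\epsilon$-injective then $\varphi\circ f\colon X\to P$ is $c\epsilon$-injective, which is precisely the composition argument you wrote out in detail. No changes are needed.
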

\begin{proof}
    The result follows that if $\varphi:X'\to P$ is an $\epsilon$-injective map, then $\varphi\circ f:X\to P$ is $c\epsilon$-injective.
\end{proof}

\begin{lemma}\rm{(\cite[Corollary 2.8]{CK})}\label{prop7}
    Let $(P,d)$ be a polyhedron. For each $n\in \mathbb{N}$, put $d_n(x,y)=\max_{1\leq i\leq n}d(x_i,y_i)$, $x=(x_i)\in P^n, y=(y_i)\in P^n$. There is an $\epsilon_0>0$ which does not depend on $n$ such that for each $\epsilon<\epsilon_0$, we have
\begin{equation*}
dim_\epsilon(P^n,d_n)=n\cdot dim(P).
\end{equation*}
\end{lemma}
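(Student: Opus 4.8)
The plan is to establish the two inequalities $\dim_\epsilon(P^n,d_n)\le n\cdot\dim(P)$ and $\dim_\epsilon(P^n,d_n)\ge n\cdot\dim(P)$ separately, with essentially all of the content sitting in the lower bound. Write $d=\dim(P)$. For the upper bound, note that the identity map $\mathrm{id}\colon(P^n,d_n)\to(P^n,d_n)$ has only singleton fibres and is therefore $\epsilon$-injective for every $\epsilon>0$; hence $\dim_\epsilon(P^n,d_n)\le\dim(P^n)$. Since a finite product of polyhedra is again a polyhedron and topological dimension is subadditive on such products, $\dim(P^n)\le nd$, giving $\dim_\epsilon(P^n,d_n)\le nd$ for all $\epsilon$. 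The guiding idea for the reverse inequality is to transfer the problem to the standard cube $I^{dn}=[0,1]^{dn}$ with the $\ell^\infty$-metric, for which $\dim_\epsilon$ is classically known and, crucially, has a threshold that does not degrade as the dimension grows.

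For the lower bound I would first record the monotonicity of $\dim_\epsilon$ under restriction to a closed subspace: if $A\subset X$ is closed and $f\colon X\to Q$ is $\epsilon$-injective, then $f|_A$ is $\epsilon$-injective for $d|_A$, whence $\dim_\epsilon(A,d|_A)\le\dim_\epsilon(X,d)$. Since $\dim(P)=d$, the polyhedron $P$ contains a $d$-simplex $\Delta$ as a subpolyhedron, and $A:=\Delta^n\subset P^n$ is closed, so it suffices to bound $\dim_\epsilon(\Delta^n,d_n|_A)$ below by $nd$. Because $\Delta$ with the metric inherited from the polyhedral metric of $P$ is a compact PL cell, there is a bi-Lipschitz homeomorphism $h\colon(I^d,\|\cdot\|_\infty)\to(\Delta,d|_\Delta)$ with a constant $L\ge1$, i.e. $L^{-1}\|u-v\|_\infty\le d(h(u),h(v))\le L\|u-v\|_\infty$. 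Forming the product map $h^{\times n}\colon(I^{dn},\|\cdot\|_\infty)\to(\Delta^n,d_n)$ and using that both metrics are coordinatewise maxima, one obtains $\|U-V\|_\infty\le L\cdot d_n\big(h^{\times n}(U),h^{\times n}(V)\big)$, so Lemma~\ref{prop6} applied with $c=L$ yields
\[
\dim_{L\epsilon}\big(I^{dn},\|\cdot\|_\infty\big)\le\dim_\epsilon\big(\Delta^n,d_n|_A\big).
\]
I would then invoke the classical computation $\dim_\delta\big(I^{m},\|\cdot\|_\infty\big)=m$ for every $\delta<1$ and every $m$ (Lebesgue's covering theorem, see \cite{BL}); with $m=dn$ and $\delta=L\epsilon$ this gives $\dim_{L\epsilon}(I^{dn},\|\cdot\|_\infty)=dn$ whenever $L\epsilon<1$. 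Setting $\epsilon_0:=L^{-1}$, the chain
\[
dn=\dim_{L\epsilon}\big(I^{dn},\|\cdot\|_\infty\big)\le\dim_\epsilon\big(\Delta^n,d_n|_A\big)\le\dim_\epsilon(P^n,d_n)
\]
holds for all $\epsilon<\epsilon_0$, which together with the upper bound proves the equality.

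The step requiring the most care is the uniformity of the threshold $\epsilon_0$ in $n$, which is precisely where the choice of the supremum metric $d_n$ is essential: the constant $L$ is that of the single embedding $\Delta\hookrightarrow P$ and is unaffected by forming $n$-fold products of sup-metrics, while the classical cube threshold $\delta<1$ is independent of the cube's dimension $dn$. Thus $\epsilon_0=L^{-1}$ depends only on $P$ (through $\Delta$ and its polyhedral metric) and not on $n$. The one genuinely topological input is the cube lower bound $\dim_\delta(I^m,\|\cdot\|_\infty)\ge m$ for $\delta<1$, which rests on the fact that any continuous map of $I^m$ into a polyhedron of dimension $<m$ must identify two points lying on opposite faces, hence at $\ell^\infty$-distance $\ge1$; I would cite this essential-map fact rather than reprove it.
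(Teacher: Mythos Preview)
The paper does not supply its own proof of this lemma; it is quoted as \cite[Corollary~2.8]{CK} and used as a black box. There is therefore nothing in the present paper to compare your argument against.

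That said, your proof is correct and is essentially the argument given in \cite{CK}. The upper bound is immediate from the identity map and $\dim(P^n)=n\dim(P)$ for polyhedra. For the lower bound, your reduction---pick a top-dimensional simplex $\Delta\subset P$, pass to the closed subset $\Delta^n\subset P^n$, transfer via the product of a fixed bi-Lipschitz homeomorphism $h\colon I^d\to\Delta$ to the cube $(I^{dn},\|\cdot\|_\infty)$, and then invoke the Lebesgue covering fact $\dim_\delta(I^m,\|\cdot\|_\infty)=m$ for $\delta<1$---is exactly the mechanism in Coornaert--Krieger. Your explicit emphasis on why $\epsilon_0=L^{-1}$ is independent of $n$ (the constant $L$ comes from the single-coordinate map $h$ and is preserved under sup-products, while the cube threshold $\delta<1$ is dimension-free) is the heart of the matter and is well isolated.
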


      Let $(X,G)$ be a topological dynamical system with metric $d$ on $X$, $\epsilon>0$ and $\{F_n\}$ a F$\phi$lner sequence of $G$. The quantity $mdim_\epsilon(X,G)$ is defined by
    \begin{equation*}
        mdim_\epsilon(X,d,G)=\lim_{n\to \infty}\displaystyle\frac{dim_\epsilon(X,d_{F_n})}{|F_n|},
    \end{equation*}
    where $d_{F_n}$ is the compatible metric on $X$ with respect to $F_n$.
    
    Moreover, the value $mdim_\epsilon(X,d,G)$ is independent of the choice of F$\phi$lner sequence.
    \begin{definition}
          The \textit{mean topological dimension} of $(X,G)$ is defined as
    \begin{equation*}
        mdim(X,G)=\lim_{\epsilon\to 0}mdim_\epsilon(X,d,G).
    \end{equation*}
    \end{definition}
    \begin{remark}
        In this article, we use the equivalent definition of the mean topological dimension defined by metric approaches, one can see \cite{C1, CK, MG} for more details.
    \end{remark}

The next proposition is used in the proof of Theorem \ref{mt} which is originally due to E. Lindenstrauss and B. Weiss in \cite{BL} for $\mathbb{Z}$-actions to estimate mean topological dimension from below.

Recall that the \textit{upper density} of a subset $J$ of $G$ is defined by
\begin{equation*}
    \delta(J)=\sup_{\{F_n\}}\limsup_{n\to \infty}\displaystyle\frac{|J\cap F_n|}{|F_n|},
\end{equation*}
where $\{F_n\}$ runs over all F$\phi$lner sequences in $G$.

\begin{proposition}\label{prop10}
    Let $P$ be a polyhedron with metric $d$ and $X$ a subshift of $P^G$. Suppose there is a subset $J$ of $G$ and a point $x_0\in X$ satisfying
    \begin{enumerate}
        \item for each $g\in G$, there is a subset $W_g$
of $P$ with
        \begin{equation*}
         \prod_{g\in G}W_g\subset X  \text{ and } \pi_{G\backslash J}(x_0)\in \pi_{G\backslash J}\Big(\prod_{g\in G}W_g\Big);
        \end{equation*}
        \item  there is a positive number $c>0$ (independent of the choice of $g\in J$) such that for each $g\in J$ there is a continuous map $f_g:P\to W_g$ with
        \begin{equation*}
            d(x,y)\leq c\cdot d\big(f_g(x),f_g(y)\big) \text{ for all }x,y\in P.
        \end{equation*}
    \end{enumerate}
    Then
    \begin{equation*}
        mdim(X,G)\geq\delta(J)\cdot dim(P).
    \end{equation*}
\end{proposition}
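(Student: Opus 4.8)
The plan is to adapt the Lindenstrauss--Weiss lower bound for $\mathbb{Z}$-subshifts to the amenable setting: for each member $F_n$ of a F{\o}lner sequence I would embed a high-dimensional cube $P^{|F_n\cap J|}$ into $(X,d_{F_n})$ by a map that is uniformly expanding, and then feed this comparison into Lemma \ref{prop6} and Lemma \ref{prop7} to read off the dimension growth.

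Since the mean dimension $mdim(X,G)=\lim_{\epsilon\to0}mdim_\epsilon(X,d,G)$ does not depend on the choice of compatible metric on $X$, I would work throughout with the metric $\rho$ on $X$ obtained by restricting a product metric $\rho(x,y)=\sum_{k\ge1}2^{-k}d(x_{g_k},y_{g_k})$ on $P^G$, for some enumeration $G=\{g_1=e_G,g_2,\dots\}$, so that in particular $\rho(x,y)\ge\tfrac12 d(x_{e_G},y_{e_G})$; set $c'=2c$. Fix an arbitrary F{\o}lner sequence $\{F_n\}$, and for each $n$ list $F_n\cap J=\{h_1,\dots,h_{N_n}\}$ (if this set is empty the desired inequality is trivial at stage $n$). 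Using hypothesis (1), fix a point $y\in\prod_{g\in G}W_g$ with $y_g=(x_0)_g$ for all $g\in G\backslash J$, and define the continuous map $\Phi_n\colon P^{N_n}\to P^G$ by declaring the $h_j$-coordinate of $\Phi_n(p_1,\dots,p_{N_n})$ to be $f_{h_j}(p_j)$ and every other coordinate to be $y_g$. Then every coordinate of $\Phi_n$ lies in the corresponding $W_g$, so $\Phi_n$ in fact maps $P^{N_n}$ into $\prod_{g\in G}W_g\subset X$.

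The core step is the metric comparison $d_{N_n}(p,q)\le c'\,d_{F_n}\big(\Phi_n(p),\Phi_n(q)\big)$, where $d_{N_n}$ denotes the max-metric on $P^{N_n}$ from Lemma \ref{prop7}. For each $j$ we have $h_j\in F_n$, and since $(h_jx)_{e_G}=x_{h_j}$ under the shift, the choice of $\rho$ together with hypothesis (2) give $d_{F_n}(\Phi_n(p),\Phi_n(q))\ge\rho(h_j\Phi_n(p),h_j\Phi_n(q))\ge\tfrac12 d(f_{h_j}(p_j),f_{h_j}(q_j))\ge\tfrac1{c'}d(p_j,q_j)$; taking the maximum over $j$ yields the claim. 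Lemma \ref{prop6} (applied with $c'$ in place of $c$) then gives $dim_{c'\epsilon}(P^{N_n},d_{N_n})\le dim_\epsilon(X,d_{F_n})$, and Lemma \ref{prop7} identifies the left-hand side with $N_n\cdot dim(P)=|F_n\cap J|\cdot dim(P)$ once $c'\epsilon$ is below the (uniform, $n$-independent) threshold $\epsilon_0$. Dividing by $|F_n|$, letting $n\to\infty$, and then using that $mdim_\epsilon(X,d,G)$ is independent of the F{\o}lner sequence while $\limsup_n|F_n\cap J|/|F_n|$ can be taken arbitrarily close to $\delta(J)$ by a suitable choice of $\{F_n\}$, I would conclude $mdim_\epsilon(X,d,G)\ge\delta(J)\cdot dim(P)$ for all sufficiently small $\epsilon$, and finally let $\epsilon\to0$.

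I do not expect a genuine obstacle here, as this is a soft Lindenstrauss--Weiss-type argument; the points requiring care are bookkeeping ones. First, one must use a metric on $X$ that actually detects the identity coordinate, and one must observe that each perturbed coordinate $h_j$ lies inside $F_n$, so that $d_{F_n}$ sees it. Second, the constant $c$ in hypothesis (2) must be uniform over $g\in J$: this uniformity is exactly what makes $\Phi_n$ expanding with a single constant $c'$ independent of $n$, which is what the passage to the limit requires.
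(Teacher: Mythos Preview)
Your proposal is correct and follows essentially the same route as the paper: embed $P^{|F_n\cap J|}$ into $X$ via the maps $f_g$, compare metrics to invoke Lemma~\ref{prop6}, compute the left side via Lemma~\ref{prop7}, and then optimize over F{\o}lner sequences. The only cosmetic differences are your normalization $\alpha_{e_G}=1/2$ (whence $c'=2c$ rather than $c$) and your use of an auxiliary point $y\in\prod_g W_g$ to fill in the coordinates on $J\setminus(F_n\cap J)$, which is in fact slightly cleaner than the paper's direct use of $x_0$ there.
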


\begin{proof}
    Given $\eta\geq 0$ with $\delta(J)-\eta\geq0$, there is a F$\phi$lner sequence $\{F_n\}_{n\in \mathbb{N}}$ of $G$ such that
    \begin{equation*}
        \limsup_{n\to \infty}\displaystyle\frac{|J\cap F_n|}{|F_n|}\geq\delta(J_n)-\eta.
    \end{equation*}

    Let $\{\alpha_g\}_{g\in G}$ be a sequence of positive number with $\alpha_{e_G}=1$ and $\underset{g\in G}{\sum}\alpha_g<\infty$, define a metric $\widetilde{d}$ on $P^G$ by
    \begin{equation*}
        \widetilde{d}(x,y)=\sum_{g\in G}\alpha_g d(x_g,y_g), \text{ for all } x,y\in P^G.
    \end{equation*}
     Recall that the metric $\widetilde{d}_F$ on $P^G$ with respect to $F\in \mathcal{F}(G)$ is given by
    \begin{equation*}
        \widetilde{d}_F(x,y)=\max_{g\in F}\widetilde{d}(gx,gy).
    \end{equation*}

     Define a metric $d_F$ on $P^F$ by
    \begin{equation*}
        d_{F}(u,v)=\max_{g\in F}d(u_g,v_g),\text{ for all } u,v\in P^F.
    \end{equation*}

   Given $n\in \mathbb{N}$, put $J_n=J\cap F_n$ and define a continuous map $\psi_n:P^{J_n}\to X$ by

    \begin{equation*} \psi_n(u)=\left\{
\begin{aligned}
& f_g(u_g) &   & {g\in J_n,} \\
& \pi_g(x_0) &   & {g\in G\backslash J_n,}\\
\end{aligned}
\right.
\end{equation*}
where $u=(u_g)\in P^{J_n}$. From (2), we have
\begin{equation*}
    d_{J_n}(u,v)\leq c\cdot\widetilde{d}_{J_n}(\psi_n(u),\psi_n(v))\leq c\cdot\widetilde{d}_{F_n}(\psi_n(u),\psi_n(v))
\end{equation*}
for all $u,v\in P^{J_n}$. By Lemma \ref{prop6}, we obtain
\begin{equation}\label{e2.1}
    dim_{c\epsilon}(P_{J_n},d_{J_n})\leq dim_\epsilon(X,\widetilde{d}_{F_n}).\tag{2.1}
\end{equation}
Then by Lemma \ref{prop7}, there exists a positive number $\epsilon_0>0$ such that for all $\epsilon<\epsilon_0$, we have
\begin{equation}\label{e2.2}
    dim_{c\epsilon}(P^{J_n},d_{J_n})=|J_n|\cdot dim(P).
\end{equation}

Let $\epsilon$ be small enough, from \eqref{e2.1} and \eqref{e2.2} above, we deduce that
\begin{equation*}
    mdim_\epsilon(X,\widetilde{d},G)=\lim_{n\to \infty}\displaystyle\frac{dim_\epsilon(X,\widetilde{d}_{F_n})}{|F_n|}\geq \limsup_{n\to \infty}\displaystyle\frac{|J_n|}{|F_n|}\cdot dim(P).
\end{equation*}
Hence
\begin{equation*}
    mdim(X,G)=\lim_{\epsilon\to 0}mdim_\epsilon(X,\widetilde{d},G)\geq (\delta(J_n)-\eta)\cdot dim(P).
\end{equation*}
The proof is completed by taking $\eta$ arbitrarily small.
\end{proof}

From Proposition \ref{prop10} we can immediately get the following result:

\begin{corollary}\label{cor11}
Let $G$ be a countable amenable group. Let $P$ be a polyhedron with metric $d$ and $X$ a subshift of $P^G$. Suppose that there is an increasing sequence $\{J_n\}_{n\in \mathbb{N}}$ of subsets of $G$, a sequence of positive number $\{c_n\}$, and a point $x_0\in X$ satisfying the following conditions:
\begin{enumerate}
    \item  the upper density of $\{J_n\}_{n\in \mathbb{N}}$ tends to $1$, that is,
    \begin{equation*}
        \lim_{n\to \infty}\delta(J_n)=1;
    \end{equation*}
    \item  fix $n\in \mathbb{N}$, for each $g\in G$, there is a subset $W^n_g$ of $P$ with
    \begin{equation*}
     \prod_{g\in G}W^n_g\subset X \text{ and }   \pi_{G\backslash J_n}(x_0)\in \pi_{G\backslash J_n}\Big(\prod_{g\in G}W^n_g\Big);
    \end{equation*}
    \item fix $n\in \mathbb{N}$, for each  $g\in J_n$, there is a continuous map $f^n_g:P\to W^n_g$ with
    \begin{equation*}
        d(x,y)\leq c_n\cdot d(f^n_g(x),f^n_g(y)),
    \end{equation*}
    for all $x, y\in P$, where $c_n$ is independent of the choice of $g\in J_n$.
\end{enumerate}
Then
    \begin{equation*}
        mdim(X,G)=mdim(P)=dim(P).
    \end{equation*}
\end{corollary}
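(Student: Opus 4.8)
The plan is to deduce the inequality $mdim(X,G)\geq dim(P)$ by applying Proposition \ref{prop10} separately for each index $n$, and then pass to the limit. Fix $n\in\mathbb{N}$. I first observe that the hypotheses (2) and (3) of the corollary are precisely hypotheses (1) and (2) of Proposition \ref{prop10} with the choices $J=J_n$, $W_g=W^n_g$, $f_g=f^n_g$, and $c=c_n$; the only point to check is that $J_n$ is a genuine subset of $G$ to which the proposition applies, which is immediate. Hence Proposition \ref{prop10} yields
\begin{equation*}
    mdim(X,G)\geq \delta(J_n)\cdot dim(P)
\end{equation*}
for every $n\in\mathbb{N}$. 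Since the left-hand side does not depend on $n$, I may let $n\to\infty$ on the right-hand side and invoke condition (1), $\lim_{n\to\infty}\delta(J_n)=1$, to conclude
\begin{equation*}
    mdim(X,G)\geq dim(P).
\end{equation*}

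For the reverse inequality I use that $X$ is a subshift of $P^G$, i.e.\ a $G$-invariant closed subset, so $mdim(X,G)\leq mdim(P^G,G)$ by monotonicity of mean topological dimension under passing to a subsystem (this follows directly from the metric definition: an $\epsilon$-injective map on $P^G$ restricts to an $\epsilon$-injective map on $X$, so $dim_\epsilon(X,\widetilde{d}_{F})\leq dim_\epsilon(P^G,\widetilde{d}_{F})$ for every Følner set $F$). It is a standard fact — and can be read off from Lemma \ref{prop7} applied with $n=|F|$ and the Følner sets $F_n$, exactly as in the computation inside the proof of Proposition \ref{prop10} — that $mdim(P^G,G)=dim(P)$, which is what the corollary records as $mdim(P)=dim(P)$. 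Combining the two inequalities gives $mdim(X,G)=dim(P)$.

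The main (and essentially only) obstacle is bookkeeping: one must make sure that the quantities $W^n_g$, $f^n_g$, $c_n$ supplied by the corollary really do fit the template of Proposition \ref{prop10} for each fixed $n$, in particular that the constant $c_n$ is uniform over $g\in J_n$ (which is assumed) and that the base point $x_0$ is the same for all $n$ (which is also assumed, and is what allows the single system $X$ to carry all the lower bounds simultaneously). Once that is in place, the corollary is a formal consequence of the proposition together with the upper bound $mdim(X,G)\leq dim(P)$; there is no further analytic content, and in particular no new estimate on dimension is needed beyond Lemmas \ref{prop6} and \ref{prop7}, which are already packaged inside Proposition \ref{prop10}.
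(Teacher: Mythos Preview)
Your proposal is correct and matches the paper's approach: the paper states that Corollary \ref{cor11} follows immediately from Proposition \ref{prop10}, and your argument spells this out by applying the proposition for each fixed $n$ and letting $n\to\infty$, together with the standard upper bound $mdim(X,G)\leq mdim(P^G,G)=dim(P)$. There is nothing to add; your bookkeeping remarks are accurate and the upper-bound justification you supply is the obvious one implicit in the paper.
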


\section{Proof of Theorem 1.2}
\subsection{Standing notations}
Let $(P^G,G)$ be the $G$-shift on $P$. For each subset $F$ of $G$ denote $\pi_F:P^G\to P^F$ the natural projection. Given $F\in \mathcal{F}(G)$, we say that $B_F$ is a \textbf{block} with respect to $F$ if $B_F\subset P^F$, and a block $B_{Fg}$ is said to be equal to $B_F$ \textit{up to translation} (or $B_{Fg}=B_F$ up to translation) if $\pi_F\left(g^{-1}\pi^{-1}(B_{Fg})\right)=B_F$.

Since the proof when $P$ is an $n$-dimensional cube is similar to the proof that $P=[0,1]$, for convenience, we assume $P=[0,1]$.
\subsection{Construction of $(X,G)$}
The construction will be performed by induction. In each step $k$, we shall find an irreducible tiling $\mathcal{T}_k$ with shape $\mathcal{S}_k$ and build a subset $X_k$ of $P^G$ which is generated from the family of blocks $\left\{B_{S_{k,j}}:S_{k,j}\in \mathcal{S}_k \text{ and } B_{S_{k,j}}\subset P^{S_{k,j}}\right\}$.

The construction of block $B_{S_{k,j}}$ follows the following process: In step $k-1$, assume that the irreducible finite tiling $\mathcal{T}_{k-1}$ with shape $\mathcal{S}_{k-1}$ has been found and the blocks $\left\{B_{S_{k-1, i}}:S_{k-1,i}\in \mathcal{S}_{k-1}\text{ and }B_{S_{k-1, i}}\subset P^{B_{S_{k-1, i}}}\right\}$ have been constructed.

Let $l_k$ be a positive integer we need (see its definition below). By Lemma \ref{teo4}, we can find an irreducible finite tiling $\mathcal{T}_k$ with shape $\mathcal{S}_k$ such that each $S_{k,j}\in \mathcal{S}_k$ contains at least $l_k$ many $\mathcal{T}_{k-1}$-tiles with the same shape $S_{k-1,i}$. In the absence of ambiguity, each $S_{k,j}\in \mathcal{S}_k$ has the form
\begin{equation}\label{e3.1}
     S_{k,j}=\underset{S_{k-1,i}\in\mathcal{S}_{k-1}}{\bigsqcup}S_{k-1,i}C_j(S_{k-1,i}),
\end{equation}
where \[C_j(S_{k-1,i})=\left\{c\in C(S_{k-1,i}):S_{k-1,i}c \text{ tiles }S_{k,j}\text{ up to right translation}\right\},\] and the disjoint union is also under the sense of suitable right translation.
Then for each $S_{k-1,i}\in \mathcal{S}_k$, we generated a finite mutually different family of blocks $B^1_{S_{k-1,i}},...,B^{s_{k,i}}_{S_{k-1,i}}\subset B_{S_{k-1,i}}$ ($s_{k,i}$ is a positive integer depending on $k$ and $S_{k-1,i}$). Finally, We construct block $B_{S_{k,j}}$ by filling $S_{k,j}c$ in \eqref{e3.1} ($c\in C_j(S_{k-1,i})$) with blocks $B_{k-1, i}$ and $\big\{B^l_{S_{k-1,i}}:l=1,...,s_{k, i}\big\}$ in different proportions we need. Namely, $B_{S_{k,j}}$
has the form
\begin{equation*}
     B_{S_{k,j}}=\prod_{S_{k-1,i}\in \mathcal{S}_{k-1}}\Big(\prod_{c\in C_j(S_{k-1,i})}B_{S_{k-1,i}c}\Big),
\end{equation*}
where $B_{S_{k-1,i}c}$ is equal to $B_{k-1, i}$ or one of $\big\{B^l_{S_{k-1,i}}:l=1,...,s_{k, i}\big\}$ up to translation.

 At the end of step $k$, a subset $X_k\subset P^G$ is constructed by filling each tile $S_{k,j}c_j\in \mathcal{T}_k$ with block $B_{k,j}$. That is,
 \[X_k|_{S_{k,j}c_j}=B_{S_{k,j}}\text{ up to translation}.\]
 
After infinite steps, we prove that each point $x\in \bigcap_{k\in \mathbb{N}}X_k$ is an almost periodic point. Moreover, given any $x\in \bigcap_{k\in \mathbb{N}}X_k$ we shall verify that $(X, G)=\big(\overline{\mathcal{O}(x)},G\big).$ has full mean topological dimension.

\textbf{Step -$\infty$}    In the beginning, we take two sequences \footnote{The authors in \cite{JQ} mentioned similar sequences without detailed example}
\begin{equation*}
    \left\{\eta(n,k):=1-\displaystyle\frac{1}{n+2}+\displaystyle\frac{1}{k+3}:n,k\in\mathbb{N}\text{ and }0\leq n<k\right\},
\end{equation*}
and

\begin{equation*}
    \left\{\eta(n):=1-\displaystyle\frac{1}{n+2},i.e.,\,\eta(n)=\lim_{k\to \infty}\eta(n,k)\right\}.
\end{equation*}

\begin{remark}\label{rmk3.1}
  It can be verified the two sequence $\left\{\eta(n,k):n\in \mathbb{N},k\in \mathbb{N}\right\}$ and $\left\{\eta(n):n\in \mathbb{N}\right\}$ satisfy
  \begin{enumerate}
      \item  $0\leq\eta(n,k)<1, 0<\eta(n)<1$ for all $n,k\in \mathbb{N}$;
      \item  fix $n\in \mathbb{N}$, the one-parameter sequence $\{\eta(n,k)\}_{k=n+1}^{\infty}$ is strictly decreasing, that is,
      \begin{equation*}
          \eta(n,k+1)<\eta(n,k), \text{ for all } 0\leq n<k;
      \end{equation*}
      \item  for any integer $n\in \mathbb{N}$ the one-parameter sequence $\{\eta(n,k)\}_{k=n+1}^{\infty}$ is bounded by $\eta(n)$ from below, i.e.,
      \begin{equation*}
          \eta(n,k)>\eta(n) \text{ for all } 0\leq s\leq k;
      \end{equation*}
      \item $\lim_{n\to \infty}\eta(n)=1.$
  \end{enumerate}

\end{remark}

\textbf{Step 0} Let $\mathcal{T}_0=\{e_G,g_1,...\}$ with shape $\mathcal{S}_0=\{e_G\}$. We define
\begin{equation*}
    B_{e_G}=P \text{ and } X_0=P^G.
\end{equation*}

\textbf{Step 1} Divide $P$ equally into $2$ closed intervals $P^1_1=[0,1/2]$, $P^1_2=[1/2,1]$ of length $1/2$ \big(if $P=[0,1]^d$ for $d>1$, we divide $P$ equally into $2^d$ many cubes\big). Take a positive number $l_1>2$ with
\begin{equation}\label{e3.2}
    \displaystyle\frac{l_1-2}{l_1}\geq \eta(0,1).
\end{equation}
By Theorem \ref{teo4}, we can find an irreducible finite tiling $\mathcal{T}_1$ with shape $\mathcal{S}_1$ such that each $S_{1,i}\in \mathcal{S}_1$ contains at least $l_1$ many $\mathcal{T}_1$-tiles with shape $S_0=\{e_G\}$.

For each $S_{1,i}\in\mathcal{S}_1$, select a subset $C_{i,0}=\{c_{i,0,1},c_{i,0,2}\}$ of $C(S_0)$ with $S_0C_{i,0}\subset S_{1,i}$ up to translation \big($S_0C_{i,0}\subset S_{1,i}$ up to translation means that if $T_i$ is a $\mathcal{T}_1$-tile with shape $S_{1,i}$ then $T_i$ contains $|C_{i,0}|$ $\mathcal{T}_0$-tiles with shape $S_0$\big). For each $S_{1,i}\in \mathcal{S}_1$, we define a block $B_{S_{1,i}}$ by
\begin{equation*}
    B_{S_{1,i}}=\big(\prod_{g\in S_{1,i}\backslash S_0C_{i,0}}P\big)\times \big(P^1_1\big)^{S_0c_{i,0,1}}\times \big(P^1_2\big)^{S_0c_{i,0,2}}.
\end{equation*}
Due to \eqref{e3.2}, the proportion of subintervals in $B_{S_{1,i}}$ with diameter more than $1$ is greater $\eta(0,1)$.

For each $c_{1,i}\in C(S_{1,i})$, we put
\begin{equation*}
    B_{S_{1,i}c_{{1,i}}}=B_{S_{1,i}} \text{ up to translation.}
\end{equation*}
 We produce a subset $X_1$ of $P^G$ by
\begin{equation*}
    X_1=\big\{x\in P^G:\pi_{S_{1,i}c_{1,i}}(x)\in B_{S_{1,i}c_{1,i}} \text{ for } S_{1,i}\in \mathcal{S}_1\text{ and }c_{1,i}\in C(S_{1,i})\big\}.
\end{equation*}

\textbf{Step k} Suppose the irreducible finite tiling $\mathcal{T}_{k-1}$ with shape $\mathcal{S}_{k-1}$ has been found,
the block $B_{S_{k-1,i}}$ has been generated, and the subset $X_{k-1}$ of $P^G$ has been constructed in \textbf{Step k-1}.
From the construction of $B_{S_{k-1,i}}$ we know that $B_{S_{k-1,i}}$ has the form
\begin{equation*}
    B_{S_{k-1,i}}=\prod_{g\in S_{k-1,i}}P_g^{k-1,i},
\end{equation*}
where $P_g^{k-1, i}$ is a closed interval. For each $g\in S_{k-1,i}$, we divide $P^{k-1,i}_g$ equally into $2^k$ subintervals and denote these subintervals as $P^{k-1,i}_{g,j}$, that is, if $P^{k-1,i}_g=[a,b]$ for some $a,b\in [0,1]$ with $a<b$, then
\begin{equation*}
    P^{k-1,i}_{g,j}=\Big[a+\lambda_j\displaystyle\frac{b-a}{2^k},a+(\lambda_j+1)\displaystyle\frac{b-1}{2^k}\Big],
\end{equation*}
where $\lambda_j\in \{0,...,2^k-1\}$. Notice that all $P^{k-1,i}_{g,j}$ have diameter $\displaystyle\frac{diam(P^{k-1,i}_g)}{2^k}$, where the $diam$ is in the sense of Euclidean metric.

Let $L_{k-1,i}=|S_{k-1,i}|$, where $|\cdot|$ is the cardinal operation of sets. Take a positive integer $r_{k-1}$ sufficiently large such that for each $S_{k-1,i}\in\mathcal{S}_{k-1}$ and $0\leq n\leq k$, we have

\begin{equation}\label{e3.3}
    \displaystyle\frac{\left|\big\{g\in S_{k-1,i}:diam(P^{k-1,i}_g)\geq 1/2^{n(n+1)/2}\big\}\right|\cdot r_{k-1}}{L_{k-1,i}\cdot(r_{k-1}+2^{kL_{k-1,i}})}\geq \eta(n,k),
\end{equation}

\begin{remark}\label{rem3.1}
    This inequality will ensure that the proportion of subinterval $P_g$ in $B_{k,j}$ with diameter greater than $1/2^{n(n+1)/2}$ is more than $\eta(n,k)$, that is
    \begin{equation*}
        \displaystyle\frac{\Big|\big\{g\in S_{k,j}:diam(P_g)\geq 1/2^{n(n+1)/2}, \text{ where }B_{S_{k,j}}=\prod_{g\in S_{k,j}} P_g\big\}\Big|}{|S_{k,j}|}\geq\eta(n,k)
    \end{equation*}
    for each $0\leq n\leq k$, and $B_{S_{k,j}}$ will be constructed as follows.
\end{remark}

Notice that if every $P^{k-1,i}_g$ is divided equally into $2^k$ subintervals for each $g\in S_{k-1,i}$, we shall generate as many as $2^{kL_{k-1,i}}$ subintervals for $S_{k-1,i}$. We denote them as $\big\{P^{k-1,i}_{g,j(g)}:g\in S_{k-1,i}, j(g)\in\{1,...,2^k\}\big\}$. Then we can define $2^{kL_{k-1,i}}$ mutualy different blocks $B^m_{S_{k-1,i}}$ by
\begin{equation*}
    B^m_{S_{k-1,i}}=\prod_{g\in S_{k-1,i}}P^{k-1,i}_{g,j(g)},\,m=1,...,2^{kL_{k-1,i}},
\end{equation*}
  namely, $B^m_{S_{k-1,i}}$ are  different combination of these subintervals and if $m_1\ne m$, we have $B^m_{S_{k-1,i}}\ne B^{m_1}_{S_{k-1,i}}$.

Let $l_k=\max\left\{r_{k-1}+2^{kL_{k-1,i}}: L_{k-1,i}=|S_{k-1,i}|\right\}$. By Lemma \ref{prop4} and Lemma \ref{teo4}, we can find a finite irreducible tiling $\mathcal{T}_k$ with shape $\mathcal{S}_k$, which is primely congruent to $\mathcal{T}_{k-1}$, such that each $S_{k,j}\in\mathcal{S}_k$ contains at least $l_k$ many $\mathcal{T}_{k-1}$-tiles with shape $S_{k-1,i}$ up to translation for each $S_{k-1,i}\in\mathcal{S}_{k-1}$.
 In the absence of ambiguity, we denote $C_j(S_{k-1,i})$ the subset of $C(S_{k-1,i})$ such that $S_{k, j}$ is tiled partly by  $S_{k-1,i}C_j(S_{k-1,i})$, namely, we have
\begin{equation}\label{e3.4}
    S_{k,j}=\underset{S_{k-1,i}\in\mathcal{S}_{k-1}}{\bigsqcup}S_{k-1,i}C_j(S_{k-1,i})\text{ up to  translation.}
\end{equation}
For each $C_j(S_{k-1,i})$, we let $C'_j(S_{k-1,i})$ be a subset of $C_j(S_{k-1,i})$ with $2^{kL_{k-1,i}}$ elements. Let $\varphi:C'_j(S_{k-1,i})\to \{1,...,2^{kL_{k-1,i}}\}$ be a bijective map. For each $c'\in C'_j(S_{k-1,i})$, we define a block $B_{S_{k-1,i}c'}$  as
\begin{equation*}
    B_{S_{k-1,i}c'}=B^{\varphi(c')}_{S_{k-1,i}} \text{ up to translation.}
\end{equation*}
For $c\in C_j(S_{k-1,i})\backslash C'_j(S_{k-1,i})$, we put
\begin{equation*}
    B_{S_{k-1,i}c}=B_{S_{k-1,i}} \text{ up to translation.}
\end{equation*}
Then we can get a block $B_{S_{k-1,i}C_j(S_{k-1,i})}$ as
\begin{equation*}
    B_{S_{k-1,i}C_j(S_{k-1,i})}=\Big(\prod_{c'\in C'_j(S_{k-1,i})}B_{S_{k-1,i}c'}\Big)\times\Big(\prod_{ c\in C_j(S_{k-1,i})\backslash C'_j(S_{k-1,i})}B_{S_{k-1,i}}\Big).
\end{equation*}
From \eqref{e3.4}, we define a block $B_{S_{k,j}}$ by
\begin{equation*}
    B_{S_{k,j}}=\prod_{S_{k-1,i}\in \mathcal{S}_{k-1}}B_{S_{k-1,i}C_j(S_{k-1,i})}.
\end{equation*}
Since each $B_{S_{k-1,i}}$ is a product of some closed subintervals $P^{k,j}_g$ of $P$, $B_{S_{k,j}}$ has the form
\begin{equation*}
    B_{S_{k,j}}=\prod_{g\in S_{k,j}}P^{k,j}_g,
\end{equation*}

\begin{remark}\label{rmk5}
    From the construction of $B_{k,j}$ and the inequality \eqref{e3.3} and Remark \ref{rem3.1}, the proportion of subintervals $P^{k,j}_g$ in $B_{S_{k,j}}$ with diameter over $1/2^{n(n+1)/2}$ is greater than $\eta(n,k)$ for each $0\leq n<k$.
\end{remark}

Finally, for each shape $S_{k,j}\in\mathcal{S}_k$ and $c_{k,j}\in C(S_{k,j})$, we define a block $B_{S_{k,j}c_{k,j}}$ by
\begin{equation*}
    B_{S_{k,j}c_{k,j}}=B_{S_{k,j}} \text{ up to translation,}
\end{equation*}
and generate a subset $X_k$ of $P^G$ by
\begin{equation*}
    X_k=\big\{x\in P^G:\pi_{S_{k,j}c_{k,j}}(x)\in B_{S_{k,j}c_{k,j}}, S_{k,j}\in \mathcal{S}_k \text{ and }c_{k,j}\in C(S_{k,j})\big\}.
\end{equation*}

\textbf{Step $\infty$}
After repeating the above process infinitely, we generate a sequence of irreducible finite tilings $\mathcal{T}_k$ with shape $\mathcal{S}_k$ and a sequence of subsets $X_k$ of $P^G$. Actually, for each $k\in\mathbb{N}$, $X_k$ has the form
\begin{equation*}
    X_k=\prod_{g\in G}P^k_g,
\end{equation*}
where $P^k_g$ is a subinterval of $P$, and  $X_0\supset X_1\supset\cdots\supset X_k\supset \cdots$ is a decreasing sequence. 

Put $\widetilde{X}=\bigcap_{k\in\mathbb{N}}X_k$, it is clear that $\widetilde{X}$ has the form
\begin{equation*}
    \widetilde{X}=\prod_{g\in G}P_g,
\end{equation*}
where $P_g$ is either a subinterval or a point of $P$.
\begin{remark}\label{rmk6}
    Remark \ref{rmk5} ensures that $P_g$ is non-empty and not all $P_g$ are single points. Moreover, due to the choice of the parameter $\eta(n,k)$ and $\eta(n)$, the proportion of subintervals $P_g$ in $\widetilde{X}$ with diameter more than $1/2^{n(n+1)/2}$ is greater than $\eta(n)$.
\end{remark}

In the end, take a point $x\in \widetilde{X}$, let $X=\overline{\mathcal{O}(x)}$, and the subshift is given by $(X,G)$.

\subsection{Minimality of $(X,G)$}Let $\widetilde{X}$ and $(X,G)$ be the subset of $P^G$ constructed above, we prove that
\begin{lemma}\label{lem13}
   $(X,G)$ is a minimal subshift of $(P^G,G)$ containing $\widetilde{X}$.

\end{lemma}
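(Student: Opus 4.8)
The plan is to establish two things: first that $(X,G)$ contains $\widetilde X$, and second that $(X,G)$ is minimal. For the inclusion $\widetilde X \subset X$, I would show that the orbit of the chosen point $x \in \widetilde X$ comes arbitrarily close to every point of $\widetilde X$ in the product metric. The key is the tiling structure: for any $y \in \widetilde X$ and any $F \in \mathcal F(G)$, I want to find $g \in G$ with $\pi_F(g x)$ close to $\pi_F(y)$. Since $\widetilde X = \bigcap_k X_k$ and each $X_k$ is built by filling $\mathcal T_k$-tiles with translates of the finitely many blocks $B_{S_{k,j}}$, both $\pi_F(gx)$ and $\pi_F(y)$ are, for $k$ large enough that $F$ sits inside a single $\mathcal T_k$-tile (up to enlarging $F$ slightly), governed by the block data at level $k$. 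Because the construction at step $k$ inserts \emph{every} refinement block $B^m_{S_{k-1,i}}$ (the $2^{kL_{k-1,i}}$ combinations of subintervals) into each larger tile via the sets $C'_j(S_{k-1,i})$, the level-$k$ blocks realize all level-$(k-1)$ subinterval patterns; iterating, the coordinates $P_g$ shrink to diameter $\le 1/2^k \to 0$ on a set whose complement has controlled density, so matching $\pi_F(y)$ up to error $1/2^k$ is achievable by translating $x$ to an appropriate tile. This gives $y \in \overline{\mathcal O(x)} = X$.

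For minimality, by \cite[Theorem 1.7]{Au} it suffices to prove that $x$ (equivalently every point of $X$) is an almost periodic point: for each neighborhood $U$ of $x$ the return-time set $N_G(x,U)$ is syndetic. Fix $\epsilon > 0$ and choose $F \in \mathcal F(G)$ and $k$ so that any point agreeing with $x$ on a suitable translate of a $\mathcal T_k$-tile lies in the $\epsilon$-ball around $x$; it is enough that the restriction of $x$ to one full $\mathcal T_k$-tile $S_{k,j}c$ determines $\pi_F$-closeness. Now the crucial input is that $\mathcal T_k$ is \emph{irreducible}: $C(S_{k,j})$ is syndetic, so there is $K_j \in \mathcal F(G)$ with $K_j C(S_{k,j}) = G$. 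Because $x \in \widetilde X \subset X_k$, every $\mathcal T_k$-tile of shape $S_{k,j}$ carries the block $B_{S_{k,j}}$ up to translation — in particular the same pattern appears (shifted) at every center in $C(S_{k,j})$. Translating $x$ by those group elements that align $F$ with a copy of the distinguished sub-pattern of $x$ inside such a tile produces a syndetic set of return times; syndeticity follows from that of $C(S_{k,j})$ together with the bounded "offset" set coming from $K_j$ and the finitely many shapes $\mathcal S_k$. Hence $x$ is almost periodic, and since $X = \overline{\mathcal O(x)}$ this forces $(X,G)$ minimal.

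The main obstacle I anticipate is the bookkeeping in the almost-periodicity argument: I need to pin down exactly which finite window $F$ of $x$ must be reproduced to force membership in $U$, then verify that the set of $g$ reproducing that window really is syndetic rather than merely non-empty or relatively dense along a Følner sequence. This requires using primeness of the congruence $\{\mathcal T_k\}$ — so that a $\mathcal T_k$-tile canonically decomposes into $\mathcal T_{k-1}$-tiles in a translation-equivariant way — to locate the window $F$ inside a \emph{single} tile of some level, and then invoking irreducibility of that level's tiling. A secondary technical point is handling the coordinates $P_g$ that remain nondegenerate intervals: there $x_g$ is only one point of an interval, so "reproducing the window" must be interpreted as reproducing the block pattern (which interval) rather than the exact value, and I must check this coarser matching still suffices for $\epsilon$-closeness once $\epsilon$ is fixed first and $k$ chosen afterward.
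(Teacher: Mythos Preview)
Your inclusion argument $\widetilde X\subset X$ is essentially the paper's: go to a high level $m$, use that every refined sub-block $B^m_{S_{k-1,i}}$ is planted inside each $S_{m,j}$, and hence some translate of $x$ matches $\pi_F(y)$ up to the diameter of the level-$m$ subintervals. That part is fine.

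The minimality argument, however, has a real gap. You propose to fix $\epsilon$, pick $k$ so that $F$ sits inside a single $\mathcal T_k$-tile, and then use irreducibility of $\mathcal T_k$ (syndeticity of $C(S_{k,j})$) together with the fact that every $\mathcal T_k$-tile of shape $S_{k,j}$ carries the block $B_{S_{k,j}}$. But ``carrying the same block'' only means the coordinates of $x$ on two such tiles lie in translates of the \emph{same product of intervals} $\prod_{g}P^{k,j}_g$; it does not force the actual values of $x$ to be close. In fact the construction is designed so that, at every level $k$, a proportion at least $\eta(0,k)>1/2$ of the coordinates in $B_{S_{k,j}}$ are the full interval $P$ (this is exactly what gives full mean dimension). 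So no choice of $k$ makes ``same level-$k$ block'' imply $\epsilon$-closeness on $F$, and your anticipated check at the end would fail rather than succeed.

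The fix is precisely the mechanism you already used for the inclusion: after fixing $F\subset S_{k,1}$ and $\epsilon$, pass to a \emph{higher} level $m>k$ so that the iterated subdivision makes the relevant subintervals have diameter $<\epsilon$. The point $x$ determines a specific refined sub-block $B^m_{S_{k,1}}\ni\pi_{S_{k,1}}(x)$, and by construction \emph{this particular} refined sub-block occurs (up to translation) inside every $\mathcal T_m$-tile of shape $S_{m,1}$, because each $B_{S_{m,1}}$ contains a copy of every refinement via the sets $C'_j$. Now invoke irreducibility of $\mathcal T_m$: the syndetic set $C(S_{m,1})$, shifted by the fixed offset locating that sub-block inside $S_{m,1}$, lands in $N_G(x,U)$. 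In short, syndeticity must come from $C(S_{m,1})$ at the refined level $m$, not from $C(S_{k,j})$ at the window level $k$; this is what the paper does.
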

\begin{proof}
   \rm{(1)} We first prove $(X,G)$ is minimal. Recall that we generated a sequence of finite irreducible tilings $\mathcal{T}_k$ such that
    \begin{equation*}
            e_G\in S_{1,1}\subset S_{2,1}\subset...\subset S_{n,1}\subset...,\text{ and }\bigcup^{\infty}_{n=1}S_{n,1}=G,
        \end{equation*}
        where $\mathcal{S}_{k}=\{S_{k,1},...,S_{k,m_k}\}$ is the shape of $\mathcal{T}_k$.
        Moreover, $\{S_{k,1}\}_{k=1}^{+\infty}$ is a F$\phi$lner sequence of $G$ and the center of each shape is syndetic.

        Let $d$ be the Euclidean metric on $P$, for each $S_{k,1}\in\mathcal{S}_k$, we define
        \begin{equation*}
            d_{S_{k,1}}(x,y)=\max_{g\in S_{k,1}}d(x_g,y_g),
        \end{equation*}
        where $x=(x_g),y=(y_g)\in P^G$.

        For each $n,k\in \mathbb{N}$, we write
        \begin{equation*}
            \big[S_{k,1},n\big]=\big\{y\in P^G:d_{S_{k,1}}(x,y)<1/n\big\},
        \end{equation*}
        $[S_{k,1},n]$ forms a neighborhood base of $x$.
        Now it suffices to show that $N_G(x,[S_{k,1},n])$ is syndetic for each $k,n\in \mathbb{N}$. 
        
        Recall that the block $B_{k,1}$ has the form
        \begin{equation*}
            B_{S_{k,1}}=\prod_{g\in S_{k,1}}P^{k,1}_g,
        \end{equation*}
        where $P^{k,1}_g$ is a subinterval of $[0,1]$.
        Let $m>k$, start from \textbf{Step k}, after $m-k$ steps, each $P^{k,1}_g$ will be divided equally into $2^{(m+1)(m-2k)/2}$ subintervals $P^{k,1,w}_g$ with $diam(P^{k,1,w}_g)=diam(P^{k,1}_g)/2^{(m+1)(m-2k)/2}$, where $w=1,2,...,2^{(m+1)(m-2k)/2}$. By combining these subintervals to form new blocks $B^m_{S_{k,1}}$, there is a block $B^m_{S_{k,1}}$ such that $\pi_{S_{k,1}}(x)\in B^m_{S_{k,1}}$.
        Since $\{\mathcal{T}_k\}$ is a primely congruent sequence and due to the construction of $B_{m,1}$, there is a $g_m\in G$ and $c_k\in C(S_{k,1})$ with $S_{k,1}c_kg_m\subset S_{m,1}$. Therefore, we can obtain
        \begin{equation*}
            \pi_{S_{k,1}}(x)\in \pi_{S_{k,1}}(B_{S_{m,1}g_m^{-1}c^{-1}_{k}})=B^m_{{S_{k,1}}},
        \end{equation*}
       and $B_{S_{m,1}g^{-1}c^{-1}_{k}}=B_{S_{m,1}}$ up to translation, which means

        \begin{equation*}
            d_{S_{k,1}}(c_kg_mx,x)<\displaystyle\frac{diam\big(P^{k,1}_g\big)}{2^{(m+1)(m-2k)/2}}.
        \end{equation*}
      
      We can require $diam(P^{k,1}_g)/2^{(m+1)(m-2k)/2}<1/n$ by letting $m$ big enough, , then
        \begin{equation*}
        d_{S_{k,1}}(c_kg_mx,x)<1/n,
        \end{equation*}
        that is
        \begin{equation*}
            c_kg_mx\in \big[S_{k,1},n\big].
        \end{equation*}

        Recall that
        \begin{equation*}
            X_m=\prod_{g\in G}P^m_g=\Big(\prod_{c_m\in C(S_{m,1})}B_{S_{m,1}c_m}\Big)\times \Big(\prod_{G\backslash S_{m,1}C(S_{m,1})}P^m_g\Big),
        \end{equation*}
        where $B_{S_{m,1}c_m}=B_{m,1}$ up to translation, then for each $c_m\in C(S_{m,1})$
        we obtain
        \begin{equation*}
            C(S_{m,1})c_kg_m\subset N_G(x,[S_{k,1},n]).
        \end{equation*}
        Since $C(S_{m,1})c_kg_m$ is syndetic, we conclude that $(X,G)$ is a minimal subshift.

        \rm{(2)} Next, we prove that $X$ contains $\widetilde{X}$. Let $y\in \widetilde{X}$, we shall prove that for each $k,n\in\mathbb{N}$, there is a $g\in G$ such that
        \begin{equation*}
            d_{S_{k,1}}(gx,y)<1/n.
        \end{equation*}
        Define
        \begin{equation*}
            \big[y,S_{k,1},n\big]=\big\{z\in P^G:d_{S_{k,1}}(y,z)<1/n\big\},
        \end{equation*}
        $\big[y,S_{k,1},n\big]$ forms a neighborhood base of $y$. Recall that the block $B_{S_{k,1}}$ has the form
        \begin{equation*}
            B_{S_{k,1}}=\prod_{g\in S_{k,1}}P^{k,1}_g,
        \end{equation*}
        where $P^{k,1}_g$ is a subinterval of $P$.

        Let $m>k$, from the construction of $B_{S_{k,1}}$ at \textbf{Step k}, after $m-k$ steps, each $P^{k,1}_g$ will be divided equally into $2^{(m+1)(m-2k)/2}$ subintervals $P^{k,l,w}_g$ with $diam(P^{k,l,w}_g)=diam(P^{k,l}_g)/2^{(m+1)(m-2k)/2}$, where $w=1,2,...,2^{(m+1)(m-2k)/2}$, and the block $B_{m,1}$ is constituted by $\prod_{g\in S_{k,1}}P^{k,1,w(g)}_g$, moreover, $\pi_{S_{k,1}}(y)$ belongs to one of the $\prod_{g\in S_{k,1}}P^{k,1,w(g)}_g$. Therefore, due to the construction of $\widetilde{X}$ and the choice of $x$, we can find an $h\in G$ such that
        \begin{equation*}
            \pi_{S_{k,1}}(hx), \pi_{S_{k,1}}(y)\in \prod_{g\in S_{k,1}}P^{k,1,w(g)}_g.
        \end{equation*}
       Let $m\in\mathbb{N}$ be big enough such that
       \begin{equation*}
           \displaystyle\frac{3\cdot diam(P^{k,l}_g)}{2^{(m+1)(m-2k)/2}}<1/n,
       \end{equation*}
       then
       \begin{equation*}
           d_{S_{k,1}}(hx,y)<1/n,
       \end{equation*}
       for some $h\in G$.
       Therefore, $hx\in \big[y,S_{k,1},n\big]$, which completes the proof of Lemma \ref{lem13}.
    \end{proof}

    \subsection{$(X,G)$ has full mean topological dimension}
In the end, we shall prove that the minimal subshift $(X, G)$ has full mean topological dimension.
    \begin{lemma}\label{lem14}
        Let $(X,G)$ be the subshift constructed above, then $mdim(X,G)=dim(P)$.
    \end{lemma}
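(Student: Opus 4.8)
The plan is to apply Corollary \ref{cor11}. So I need to exhibit an increasing sequence $\{J_n\}$ of subsets of $G$ with $\lim_n \delta(J_n)=1$, constants $c_n>0$, the distinguished point $x_0=x\in\widetilde{X}\subset X$, and for each $n$ a family $\{W^n_g\}_{g\in G}$ of subintervals of $P$ together with expanding maps $f^n_g:P\to W^n_g$, so that $\prod_g W^n_g\subset X$ and $\pi_{G\setminus J_n}(x)\in\pi_{G\setminus J_n}(\prod_g W^n_g)$. The natural candidate is dictated by Remark \ref{rmk6}: for a fixed $n$, let
\begin{equation*}
    J_n=\bigl\{g\in G:\operatorname{diam}(P_g)\geq 1/2^{n(n+1)/2}\bigr\},
\end{equation*}
where $\widetilde{X}=\prod_{g\in G}P_g$. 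Then $\delta(J_n)\geq\eta(n)$ by Remark \ref{rmk6}, and $\eta(n)\to 1$ by Remark \ref{rmk3.1}(4), which gives condition (1). To see $\{J_n\}$ is increasing note that the exponent $n(n+1)/2$ is increasing in $n$, so the threshold shrinks and the set $J_n$ grows.

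For conditions (2) and (3), fix $n$ and set $W^n_g=P_g$ for every $g\in G$; since $\widetilde{X}=\prod_g P_g\subset X$ (each $X_k\supset\widetilde{X}$, hence $X=\overline{\mathcal{O}(x)}$ contains $\widetilde{X}$ by Lemma \ref{lem13}), we have $\prod_g W^n_g=\widetilde{X}\subset X$, and trivially $\pi_{G\setminus J_n}(x)\in\pi_{G\setminus J_n}(\widetilde{X})$ because $x\in\widetilde{X}$. For condition (3), for $g\in J_n$ the interval $P_g=[a_g,b_g]$ has length $b_g-a_g\geq 1/2^{n(n+1)/2}$, so the affine surjection $f^n_g:P=[0,1]\to P_g$, $f^n_g(t)=a_g+(b_g-a_g)t$, satisfies $d(f^n_g(s),f^n_g(t))=(b_g-a_g)|s-t|\geq 2^{-n(n+1)/2}d(s,t)$; thus condition (3) holds with $c_n=2^{n(n+1)/2}$, independent of $g\in J_n$. (When $P=[0,1]^d$ one replaces these affine maps by the coordinatewise affine maps onto the subcubes $P_g$, and $c_n$ changes only by the dimension-dependent bookkeeping already folded into the construction.) With all hypotheses of Corollary \ref{cor11} verified, we conclude $mdim(X,G)=\dim(P)$.

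The only genuinely delicate point is making sure the sets $W^n_g=P_g$ really do give $\prod_g W^n_g\subset X$ rather than merely $\subset\widetilde{X}$ — but this is exactly the content of part (2) of Lemma \ref{lem13}, which shows $\widetilde{X}\subset X$, so no extra work is needed there. A second point that deserves a line of care is the lower bound $\delta(J_n)\geq\eta(n)$: this is asserted in Remark \ref{rmk6}, and it rests on the fact that along the Følner sequence $\{S_{k,1}\}$ the proportion of $g\in S_{k,1}$ with $\operatorname{diam}(P_g)\geq 1/2^{n(n+1)/2}$ is at least $\eta(n,k)>\eta(n)$ for every $k>n$ (Remark \ref{rmk5} and Remark \ref{rmk3.1}(3)), so taking $k\to\infty$ along this Følner sequence yields $\limsup_k |J_n\cap S_{k,1}|/|S_{k,1}|\geq\eta(n)$ and hence $\delta(J_n)\geq\eta(n)$. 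The reverse inequality $mdim(X,G)\leq\dim(P)$ is automatic since $X\subset P^G$ and $mdim(P^G,G)=\dim(P)$, so equality follows.
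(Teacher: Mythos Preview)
Your proposal is correct and follows essentially the same approach as the paper: both define $J_n=\{g\in G:\operatorname{diam}(P_g)\geq 1/2^{n(n+1)/2}\}$, use the F\o lner sequence $\{S_{k,1}\}$ together with Remarks \ref{rmk5} and \ref{rmk6} to get $\delta(J_n)\geq\eta(n)\to 1$, take $W^n_g=P_g$ with the affine surjections $f^n_g:P\to P_g$ giving $c_n=2^{n(n+1)/2}$, and then invoke Corollary \ref{cor11}. If anything, your write-up is more careful than the paper's---you explicitly verify that $\{J_n\}$ is increasing, spell out why $\prod_g W^n_g=\widetilde{X}\subset X$ requires Lemma \ref{lem13}(2), and unpack the density estimate via Remark \ref{rmk5} and Remark \ref{rmk3.1}(3), whereas the paper leaves these points to the reader.
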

    \begin{proof}
        Recall that the subset $\widetilde{X}$ has the form
        \begin{equation*}
            \widetilde{X}=\prod_{g\in G}P_g,
        \end{equation*}
        where $P_g$ is a subinterval of $P$. From Remark \ref{rmk6} we know that the proportion of subintervals $P_g$ with diameter greater than $1/2^{n(n+1)/2}$ is over $\eta(n)$.

        Put
        \begin{equation*}
            J_n=\left\{g\in G:diam(P_g)\geq 1/2^{n(n+1)/2}\right\}.
        \end{equation*}
Since $\{S_{k,1}\}$ is a F$\phi$lner sequence of $G$, then
\begin{equation*}
    \delta(J_n)\geq \limsup_{k\to \infty}\displaystyle\frac{|J_n\cap S_{k,1}|}{|S_{k,1}|}>\eta(n).
\end{equation*}
From Remark \ref{rmk3.1} (4), we have $\lim_{n\to \infty}\delta(J_n)=1$.

For each $g\in G$ such that $P_g$ the subinterval with diameter greater than  $1/2^{n(n+1)/2}$. We can define a continuous linear map $f^n_g$ from $P$ to $P_g$ satisfying
\begin{equation*}
    d(x,y)\leq 2^{n(n+1)/2}d(f^n_g(x),f^n_g(y)).
\end{equation*}
Take $x\in X$, it is clear that $(X,G)$ satisfies all the conditions of Corollary \ref{cor11}. Hence $(X,G)$ has full mean topological dimension.
    \end{proof}

    \begin{theorem}[= Theorem 2]
        Let $G$ be an infinite countable amenable group and $P$ a polyhedron with topological dimension $dim(P)$, there is a minimal subshift $X$ of $P^G$ such that its mean topological dimension $mdim(X, G)=dim(P)$.
    \end{theorem}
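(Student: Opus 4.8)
The plan is to assemble the pieces that have already been developed in the excerpt and verify that they combine to give a minimal subshift of full mean dimension. The construction of $\widetilde X=\bigcap_{k}X_k$ and $X=\overline{\mathcal O(x)}$ is carried out inductively in \textbf{Step $0$} through \textbf{Step $\infty$}, using the primely congruent sequence of finite irreducible tilings $\{\mathcal T_k\}$ furnished by Lemma \ref{teo4}, together with the refinement count guaranteed by Lemma \ref{prop4}. The two numerical sequences $\eta(n,k)$ and $\eta(n)$ from \textbf{Step $-\infty$} are the bookkeeping device that propagates a lower bound on the proportion of ``large'' coordinate intervals $P_g$ through all the divisions: at each step every interval is subdivided into $2^k$ pieces, but enough copies of the previous block (as opposed to its subdivided variants) are re-inserted — controlled by the integer $r_{k-1}$ via inequality \eqref{e3.3} — that the proportion of surviving long intervals stays above $\eta(n,k)$, and hence in the limit above $\eta(n)$, with $\eta(n)\to 1$.

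First I would invoke Lemma \ref{lem13} to get minimality of $(X,G)$: the key point there is that because $\{\mathcal T_k\}$ is primely congruent with nested shapes $S_{k,1}$ whose union is $G$, and because the centers $C(S_{m,1})$ are syndetic, every basic neighborhood $[S_{k,1},n]$ of $x$ has a syndetic return-time set, so $x$ is almost periodic and $X=\overline{\mathcal O(x)}$ is minimal; the same argument shows $\widetilde X\subset X$, so $X$ is genuinely ``large.'' Second I would invoke Lemma \ref{lem14} for the mean-dimension lower bound: setting $J_n=\{g\in G:\diam(P_g)\ge 1/2^{n(n+1)/2}\}$, Remark \ref{rmk6} gives $\delta(J_n)\ge\limsup_k |J_n\cap S_{k,1}|/|S_{k,1}|>\eta(n)$ since $\{S_{k,1}\}$ is a F\o lner sequence, and Remark \ref{rmk3.1}(4) gives $\delta(J_n)\to 1$. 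For each $g\in J_n$ one picks the affine surjection $f^n_g:P\to P_g$, which expands distances by at most the factor $c_n=2^{n(n+1)/2}$ independently of $g$; condition (2) of Corollary \ref{cor11} holds because $\widetilde X=\prod_g P_g\subset X$ and $\pi_{G\setminus J_n}(x)$ lies in the appropriate product, and condition (1) is $\delta(J_n)\to 1$. Corollary \ref{cor11} then yields $mdim(X,G)\ge dim(P)$. The reverse inequality $mdim(X,G)\le dim(P)$ is automatic: $X$ is a subshift of $P^G$ with $P$ a polyhedron, so $mdim(X,G)\le mdim(P^G,G)=dim(P)$ (this is the standard upper bound for mean dimension of a $G$-subshift over a finite-dimensional fibre, cf.\ \cite{CK, C1}). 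Finally, the case of a general polyhedron $P$, or an $n$-cube, reduces to $P=[0,1]$ as noted after the Standing Notations, since subdividing cubes into $2^{dn}$ subcubes and using the product estimate of Lemma \ref{prop7} works verbatim.

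The main obstacle — the part that requires genuine care rather than assembly — is the joint calibration of the parameters $l_k$, $r_{k-1}$ and the tilings $\mathcal T_k$ so that inequality \eqref{e3.3} can actually be satisfied simultaneously for all shapes $S_{k-1,i}$ and all $0\le n\le k$, while keeping the construction consistent with the prime congruence and with the requirement that each $S_{k,j}$ contain at least $l_k$ copies of each $S_{k-1,i}$-tile. Concretely one must check: (i) that $r_{k-1}$ can be chosen large enough that the ``replacement fraction'' $r_{k-1}/(r_{k-1}+2^{kL_{k-1,i}})$ exceeds the worst-case ratio needed in \eqref{e3.3} — this uses the strict inequality $\eta(n,k)<1$ and $\eta(n,k)>\eta(n)$ from Remark \ref{rmk3.1}, plus the fact that dividing by $2^k$ at most increases the exponent $n(n+1)/2$ index by a controlled amount, so that a long interval at level $k-1$ still contributes long sub-intervals at level $k$ for the relevant range of $n$; (ii) that Lemma \ref{prop4} really does let us demand $l_k=\max_i(r_{k-1}+2^{kL_{k-1,i}})$ many $\mathcal T_{k-1}$-tiles of each shape inside each $S_{k,j}$, which it does by choosing the invariance parameters $(T,\epsilon)$ appropriately and feeding them into Lemma \ref{teo4}. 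Once this inductive bookkeeping is verified, Remark \ref{rmk5} and Remark \ref{rmk6} follow, and the proof is complete by combining Lemma \ref{lem13} and Lemma \ref{lem14} with the trivial upper bound.
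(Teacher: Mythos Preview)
Your proposal is correct and follows exactly the paper's approach: the proof of the theorem is simply the conjunction of Lemma \ref{lem13} (minimality and $\widetilde X\subset X$) and Lemma \ref{lem14} (full mean dimension via Corollary \ref{cor11}), which is precisely what the paper records in a single sentence. Your additional discussion of the parameter calibration and the trivial upper bound $mdim(X,G)\le mdim(P^G,G)=dim(P)$ is accurate elaboration on points the paper either absorbs into the earlier construction or leaves implicit in the statement of Corollary \ref{cor11}.
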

    \begin{proof}
        It directly follows from Lemma \ref{lem13} and Lemma \ref{lem14}.
    \end{proof}

\section*{Acknowledgement}
The second author was supported by NNSF of China (Grant No. 12201120).

\vspace{0.5cm}

\address{Department of Mathematics, Nanjing University, Nanjing 210093, People's Republic of China}

\textit{E-mail}: \texttt{yzy199707@gmail.com}

\address{School of Mathematics and Statistics, Fuzhou University, Fuzhou 350116, People's Republic of China}

\textit{E-mail}: \texttt{xzb2020@fzu.edu.cn}

\end{document}